\newtheorem{Theorem}{Theorem}[section]
\newtheorem{Proposition}[Theorem]{Proposition}
\newtheorem{Lemma}[Theorem]{Lemma}
\newtheorem{Corollary}[Theorem]{Corollary}
\theoremstyle{definition}
\newtheorem{Definition}[Theorem]{Definition}
\newtheorem{Remark}[Theorem]{Remark}
\newcommand{\bTheorem}[1]{
\begin{Theorem} \label{T#1} }
\newcommand{\eT}{\end{Theorem}}
\newcommand{\bProposition}[1]{
\begin{Proposition} \label{P#1}}
\newcommand{\eP}{\end{Proposition}}
\newcommand{\bLemma}[1]{
\begin{Lemma} \label{L#1} }
\newcommand{\eL}{\end{Lemma}}
\newcommand{\bCorollary}[1]{
\begin{Corollary} \label{C#1} }
\newcommand{\eC}{\end{Corollary}}
\newcommand{\bRemark}[1]{
\begin{Remark} \label{R#1} }
\newcommand{\eR}{\end{Remark}}
\newcommand{\bDefinition}[1]{
\begin{Definition} \label{D#1} }
\newcommand{\eD}{\end{Definition}}
\newcommand{\Del}{\Delta_x}
\newcommand{\prst}{\mathbb{P}}
\newcommand{\p}{\mathbb{P}}
\newcommand{\TN}{\mathcal{T}^N}
\newcommand{\intTN}[1]{\int_{\mathcal{T}^N} #1 \ \dx}
\newcommand{\bfw}{\mathbf{w}}
\newcommand{\bfphi}{\boldsymbol{\varphi}}
\newcommand{\bFormula}[1]{
\begin{equation} \label{#1}}
\newcommand{\eF}{\end{equation}}
\newcommand{\Ov}[1]{\overline{#1}}
\newcommand{\DC}{C^\infty_c}
\newcommand{\vr}{\varrho}
\newcommand{\vu}{\vc{u}}
\newcommand{\vc}[1]{{\bf #1}}
\newcommand{\Div}{{\rm div}_x}
\newcommand{\Grad}{\nabla_x}
\newcommand{\tn}[1]{\mathbb{#1}}
\newcommand{\dx}{\,{\rm d} {x}}
\newcommand{\dt}{\,{\rm d} t }
\newcommand{\dxdt}{\dx \ \dt}
\newcommand{\intT}[1]{ \int_{\TN} #1 \ \dx}
\newcommand{\D}{{\rm d}}
\newcommand{\pas}{\p\text{-a.s.}}
\newcommand{\ep}{\varepsilon}
\newcommand{\expe}[1]{ \mathbb{E} \left[ #1 \right] }
\newcommand{\bas}{(  \Omega, \mathfrak{F}, ( \mathfrak{F}_t )_{t \geq 0} , \prst)}
\definecolor{Cgrey}{rgb}{0.85,0.85,0.85}
\definecolor{Cblue}{rgb}{0.50,0.85,0.85}
\definecolor{Cred}{rgb}{1,0,0}
\definecolor{fancy}{rgb}{0.10,0.85,0.10}
\newcommand\Cbox[2]{%
    \newbox\contentbox%
    \newbox\bkgdbox%
    \setbox\contentbox\hbox to \hsize{%
        \vtop{
            \kern\columnsep
            \hbox to \hsize{%
                \kern\columnsep%
                \advance\hsize by -2\columnsep%
                \setlength{\textwidth}{\hsize}%
                \vbox{
                    \parskip=\baselineskip
                    \parindent=0bp
                    #2
                }%
                \kern\columnsep%
            }%
            \kern\columnsep%
        }%
    }%
    \setbox\bkgdbox\vbox{
        \color{#1}
        \hrule width  \wd\contentbox %
               height \ht\contentbox %
               depth  \dp\contentbox
        \color{black}
    }%
    \wd\bkgdbox=0bp%
    \vbox{\hbox to \hsize{\box\bkgdbox\box\contentbox}}%
    \vskip\baselineskip%
}
\date{}
\begin{document}


\title{On solvability and ill-posedness of the compressible Euler system subject to stochastic forces}

\author{Dominic Breit \and Eduard Feireisl
\thanks{The research of E.F.~leading to these results has received funding from the
European Research Council under the European Union's Seventh
Framework Programme (FP7/2007-2013)/ ERC Grant Agreement
320078. The Institute of Mathematics of the Academy of Sciences of
the Czech Republic is supported by RVO:67985840.}
\and Martina Hofmanov\' a}

\date{\today}

\maketitle

\centerline{Department of Mathematics, Heriot-Watt University}

\centerline{Riccarton Edinburgh EH14 4AS, UK}

\bigskip

\centerline{Institute of Mathematics of the Academy of Sciences of the Czech Republic}

\centerline{\v Zitn\' a 25, CZ-115 67 Praha 1, Czech Republic}

\bigskip

\centerline{Technical University Berlin, Institute of Mathematics}

\centerline{Stra\ss e des 17. Juni 136, 10623 Berlin, Germany}

\bigskip

\begin{abstract}

We consider the (barotropic) Euler system describing the motion of a compressible inviscid fluid driven by a stochastic forcing. Adapting the method of convex integration we show that the initial value problem is  ill-posed in the class of weak (distributional) solutions. Specifically, we find a
sequence $\tau_M \to \infty$ of positive stopping times for which the Euler system admits infinitely many solutions originating from the same initial data.
The solutions are weak in the PDE sense but strong in the probabilistic sense, meaning, they are defined on an {\it a priori} given stochastic basis and adapted to the driving stochastic process.

\end{abstract}


\section{Introduction}
\label{i}

Solutions of
nonlinear systems of conservation laws, including the compressible Euler system discussed in the present paper, are known to develop singularities in a finite time even for smooth
initial data. Weak solutions that can accommodate these singularities provide therefore a suitable framework for studying the behavior of the system
in the long run. A delicate and still largely open question is well-posedness of the associated initial value problem in the class of weak solutions. More precisely, a suitable admissibility criterion is needed to select the physically relevant solution.
The method of \emph{convex integration}, developed in the context of fluid mechanics by De Lellis and Sz\' ekelyhidi \cite{DelSze},
gives rise to several striking results concerning well/ill-posedness of the Cauchy problem for the Euler system and related models of inviscid fluids, see e.g. Chiodaroli  \cite{Chiod}, De~Lellis and Sz\' ekelyhidi \cite{DelSze2}, \cite{DelSze3}. In particular, the barotropic Euler system in 2 and 3 space dimensions is ill-posed in the class of \emph{admissible} entropy solutions (solutions dissipating
energy) even for rather regular initial data, see Chiodaroli, De Lellis and Kreml \cite{ChiDelKre}, Chiodaroli and Kreml \cite{ChiKre}.

In the present paper, we show that this difficulty persists even in the presence of a random forcing. As a model example, we consider
the barotropic Euler system describing the time evolution of the density $\vr$ and the velocity $\vu$ of a compressible fluid:
\begin{eqnarray} \label{i1}
\D \vr + \Div (\vr \vu) \dt &=& 0\\
\label{i2}
\D (\vr \vu) + \Div(\vr \vu \otimes \vu) \dt + \Grad p(\vr) \dt &=& \vr \vc{G}(\vr, \vr \vu)\, \D W,
\end{eqnarray}
where $p=p(\vr)$ is the pressure, and the term $\vr\vc{G}(\vr,\vr\vu) \,\D W$  represents a random volume force acting on the fluid. To avoid technicalities,
we focus on two iconic examples of
forcing, {namely},
\begin{equation} \label{i3}
\vr \vc{G}(\vr, \vr\vu) \,\D W = \vr \vc{G} \,\D W = \vr \sum_{i = 1}^\infty \vc{G}^i\, \D \beta_i,\quad \vc{G}^i = \vc{G}^i(x),
\end{equation}
or
\begin{equation} \label{i4}
\vr \vc{G}(\vr, \vr \vu)\, \D W = \vr \vu\, \D \beta.
\end{equation}
Here $\beta_i = \beta_i(t)$, $\beta = \beta(t)$ are real-valued Wiener  processes whereas the diffusion coefficients
$\vc{G}^i$ are smooth functions depending only on the spatial variable $x$. For the sake of simplicity, we consider periodic boundary conditions, meaning the underlying spatial domain can be identified with a {flat} torus,
\[
\TN = \left( [0,1]|_{\{0,1\}} \right)^N,\ N = 2,3.
\]
{Other boundary conditions, in particular the impermeability of the boundary, could be accommodated at the expense of additional technical difficulties.}

The problem of solvability of the stochastic compressible Euler system (\ref{i1}), (\ref{i2}) is very challenging with only a few results available. In space dimension 1, Berthelin and Vovelle \cite{BV13} proved existence of entropy solutions. These solutions are also weak in the probabilistic sense, that is, the underlying stochastic elements are not known in advance and become part of the solution. The only available results in higher space dimensions concern the local well-posedness of strong solutions. To be more precise, given a sufficiently smooth initial condition
\begin{equation} \label{i5}
\vr(0, \cdot) = \vr_0, \quad \vr \vu(0, \cdot) = (\vr \vu)_0,
\end{equation}
it can be shown that the problem (\ref{i1}), (\ref{i2}), (\ref{i5}) admits a unique local strong solution taking values in the class of Sobolev spaces $W^{m,2}$ of order $m > \frac{N}{2} + 3$. These solutions are strong in both PDE and probabilistic sense, i.e. they are constructed on a given stochastic basis with a given Wiener process. Nevertheless, they exist  (and are unique in terms of the initial data) only up to a strictly positive maximal stopping time $\tau$. Beyond this time that may be finite, the solutions develop singularities and uniqueness is not known. We refer the reader to
\cite{BrFeHo2016} where the stochastic compressible Navier--Stokes system with periodic boundary conditions was treated, and in particular to \cite[Remark 2.10]{BrFeHo2016} for a discussion of the inviscid case.
Let us finally remark that general symmetric hyperbolic systems on the whole space $R^N$ were studied in \cite{JUKim}.

For completeness, let us  mention that (\ref{i4}) may be seen as a ``damping'' term, the regularizing
effect of which in the context of \emph{incompressible} fluids has been recognized by Glatt--Holtz and Vicol \cite{GHVic}, and for a general symmetric
hyperbolic system by Kim \cite{JUKim}. To be more precise, in \cite{JUKim} it was shown that the probability that the strong solution never blows up can be made arbitrarily close to 1 provided the initial condition is sufficiently small. In \cite{GHVic} it was proved that the smallness assumption on the initial condition can be replaced by large intensity of the noise. Besides, in the case of additive noise, which in our setting corresponds to \eqref{i3}, \cite{GHVic} showed global existence of strong solutions to incompressible Euler equations in two dimensions.

\bigskip

Our goal in the present paper is to show that the problem (\ref{i1}), (\ref{i2}) is ill-posed
in the class of weak (distributional) solutions. More precisely, we show that there exists an increasing sequence of strictly positive stopping times
$\tau_M $ with $\tau_M \to \infty$ as $M \to \infty$ a.s., such that problem (\ref{i1}), (\ref{i2}), (\ref{i3}) or (\ref{i4}), (\ref{i5}) admits infinitely many
weak solutions in the time interval $[0, \tau_M \wedge T)$ for any positive $T$.
We emphasize
that \emph{weak} is meant {only} in the PDE sense - the spatial derivatives are understood in the distributional framework - while solutions are \emph{strong} in the probabilistic sense. To be more precise, the stochastic basis together with a driving Wiener process $W$ are given and we construct infinitely many solutions that are stochastic processes adapted to the given filtration.
This is particularly interesting in light of the fact that uniqueness is violated. Indeed, without the knowledge of uniqueness it is typically only possible to construct probabilistically weak solutions that are not adapted to the given Wiener process. This already applies on the level of SDEs, see, for instance, the discussion in \cite[Chapter 5]{KaratzasShreve}.

Formally, both (\ref{i3}) and (\ref{i4}) represent a \emph{multiplicative} noise. Nevertheless, under these assumptions, the system of stochastic PDEs \eqref{i1}, \eqref{i2} may be reduced to a system of PDEs with random coefficients by means of a simple transformation. As a consequence, the stochastic integral no longer appears in the system and deterministic methods can be employed pathwise. Such a semi-deterministic approach was already used in many works,
see for instance Feireisl, Maslowski, Novotn\'y \cite{FeMaNo}, Tornatore and Yashima-Fujita \cite{TorYasA} for the compressible setting, and the seminal paper by Bensoussan and Temam \cite{BenTem} for the incompressible case. However, we point out that in all these references, the nontrivial  issue  of adaptedness of solutions with respect to the underlying stochastic perturbation remained unsolved. Therefore, it was not possible to go back to the original formulation of the problem with a well-defined stochastic It\^o integral.
Even though we employ a similar semi-deterministic approach to \eqref{i1}, \eqref{i2}, \eqref{i3} or \eqref{i4}, we are able to answer affirmatively the question of adaptedness and accordingly the stochastic It\^o integral in the original formulation \eqref{i1}, \eqref{i2} is well-defined.

To be more precise, for both (\ref{i3}) and (\ref{i4}), we rewrite
(\ref{i1}), (\ref{i2}) as an \emph{abstract Euler system} with variable random coefficients in the spirit of \cite{Fei2016}. This relies on the particular structure
of the compressible Euler system and its interplay with stochastic perturbations satisfying  \eqref{i3} or \eqref{i4}.
The resulting problem is then solved by an adaptation of the deterministic method of convex integration developed by De Lellis and Sz\' ekelyhidi \cite{DelSze3}.
The main difficulty is to ensure that the abstract construction based on the concept of subsolutions yields a solution $\vr$, $\vr \vu$ \emph{adapted}
to the noise $W$. This is done by careful analysis of the oscillatory lemma of  De Lellis and Sz\' ekelyhidi \cite{DelSze3},
where {adaptedness} is achieved by a delicate use of the celebrated Ryll--Nardzewski theorem on the existence of a measurable selection of a multivalued
mapping.

The key point is to study a certain non-positive functional $I$ (see Section \ref{sub:var}) defined on an appropriate class of subsolutions (see Section \ref{sec:subsol}) to the abstract Euler system. These subsolutions capture already all the required (probabilistic) properties expected from the solutions.
Similarly to \cite{DelSze3}, the existence of {infinitely many} solutions {to the original problem} is obtained by applying an abstract Baire category argument based on the possibility
of augmenting a given subsolution by rapidly oscillating increments. Determining the amplitude as well as frequency of these oscillatory components at a
given time $t$ requires knowing the behavior of a given subsolution up to the time $t + \delta$, $\delta > 0$. The specific value of $\delta$ is in general a random variable, the value of which depends on the behavior of the noise $W$ in the interval $[t, t + \delta)$. Consequently, it is not adapted
with respect to the natural filtration associated to the noise. The problem can be solved only if $\delta > 0$ is deterministic, specifically if the
solution paths belong to a fixed compact set. To ensure this, we replace $W$ by $W_M(t) = W(t \wedge \tau_M)$, where $\tau_M$ is a family of suitable
stopping times defined in terms of the H\" older norm of $W$. It is exactly this rather technical difficulty that restricts validity of our main result
to the random time interval $[0, \tau_M)$. Note, however, that $\tau_M$ can be made arbitrarily large with probability arbitrarily close to one.

Let us stress that our results apply \emph{mutatis mutandis} to situations when the driving force is given by a more general stochastic process or a deterministic signal of low regularity. Provided a suitable transformation formula to a PDE with random coefficients can be justified, the only  ingredient
is the one required in Section \ref{ss1} for the construction of the corresponding stopping times $\tau_M$. Namely, the trajectories of the driving stochastic process are supposed to be a.s. H\"older continuous for some $a\in (0,1)$. Then existence of infinitely many weak solutions (to the transformed system) adapted to the given stochastic process follows. Whether it is  possible to go back to the original formulation then depends on the particular stochastic process at hand, namely, whether a corresponding stochastic integral can be constructed. If the driving signal is a deterministic H\"older continuous path, the stopping times are not needed and we obtain infinitely many weak solutions (to the transformed system) defined on the full time interval $[0,T]$.

It is important to note that the restriction to the multidimensional case $N=2,3$ is absolutely essential here and the
the variant of the method of convex integration presented below does not work for $N=1$.
Indeed, the method leans on the property of the system to admit oscillatory solutions. As observed in the pioneering work by DiPerna
\cite{DiP1}, \cite{Dip3}, the deterministic counterpart of (\ref{i1}), (\ref{i2}) appended by suitable admissibility conditions gives rise to a solution set that is precompact
in the $L^p$ framework if $N=1$.

\bigskip

To conclude this introductory part, let us summarize the current state of understanding of a compressible flow of an inviscid fluid under stochastic perturbation. Consider a sufficiently smooth initial condition \eqref{i5} and a fixed stochastic basis. On the one hand, it can be shown that there exists a unique local strong solution. However, in view of our result, there exist infinitely many weak solutions emanating from the same initial datum. The very natural question is therefore whether one can compare these two kinds of solutions. In fluid dynamics, it is often possible to establish the so-called weak--strong uniqueness result: strong solutions coincide with weak solutions satisfying a suitable form of  energy inequality. The corresponding result for stochastic compressible Navier--Stokes system was proved in \cite{BFH17}. Consequently, it would be interesting to see whether our weak solutions could be constructed to satisfy an energy inequality. In analogy with the deterministic setting, we know this might be possible only for certain initial data and 
we leave this problem to be addressed in future work.

The paper is organized as follows. In Section \ref{M}, we introduce a proper definition of a weak solution and state our main results.
In Section \ref{T}, the problem is rewritten in a semi-deterministic way that eliminates the explicit presence of stochastic integrals. In Section
\ref{AE}, we rewrite the system as an abstract Euler problem in the spirit of \cite{Fei2016}. Section \ref{CI} is the heart of the paper. Here, the
apparatus of convex integration developed by De Lellis and Sz\' ekelyhidi \cite{DelSze3} is adapted to stochastic framework. The main result is a stochastic
variant of the oscillatory lemma (Lemma \ref{OL1}) proved via Ryll--Nardzewski theorem on measurable selection. The proof of the main result is completed in Section \ref{Inf}.

\section{Problem formulation and main results}
\label{M}

Let $\bas$ be a  probability space with a complete right-continuous filtration $(\mathfrak{F}_t)_{t\geq0}$.
For the sake of simplicity, we restrict ourselves to the case of a single noise, specifically,
\begin{equation} \label{M1}
\vr \vc{G} (\vr, \vr \vu) \D W = \vr \vc{G}(x) \D \beta \quad \mbox{or}\quad \vr \vc{G} (\vr, \vr \vu) \D W =\vr \vu \D \beta,
\end{equation}
where $\beta = \beta(t)$ is a standard Wiener process relative to the filtration $(\mathfrak{F}_t)_{t\geq0}$. In particular, we may correctly define the stochastic integral (in It\^{o}'s sense)
\[
\int_0^\tau \left( \intTN{ \vr \vc{G}(\vr, \vr \vu) \cdot \bfphi } \right) \D W
\]
as soon as the processes
\begin{equation} \label{MMM1-}
t  \mapsto \intTN{ \vr \phi }, \ t \mapsto \intTN{ \vr \vu \cdot \bfphi }
\end{equation}
are $(\mathfrak{F}_t)$-progressively measurable for any smooth (deterministic) test functions $\phi = \phi(x)$ and $\bfphi = \bfphi(x)$.

\begin{Definition} \label{MD1}

We say that $[\vr, \vu, \tau]$ is a weak solution to problem (\ref{i1}), (\ref{i2}), (\ref{i5}) with a stopping time $\tau$ provided
\begin{enumerate}[i)]
\item
$\tau \geq 0$ is an $(\mathfrak{F}_t)$-stopping time;
\item The density $\vr$ is  $(\mathfrak{F}_t)$-adapted and satisfies
\[
\vr \in C([0, \tau); W^{1, \infty}(\TN)), \ \vr > 0 \ \prst\mbox{-a.s};
\]
\item The momentum $\vr\vc{u}$ satisfies $t \mapsto \intTN{ \vr \vu \cdot \bfphi }\in C([0,\tau])$  for any $\bfphi \in \DC(\TN; R^N)$, the stochastic process $t \mapsto \intTN{ \vr \vu \cdot \bfphi }$ is $(\mathfrak{F}_t)$-adapted,
\[
\begin{split}
\vr \vu \in C_{\rm weak}([0, \tau); L^2(\TN; R^N)) \cap L^\infty((0,\tau) \times \TN; R^N)\ \pas;
\end{split}
\]
\item For all $\phi \in \DC(\TN)$ and all $t\geq0$ the following holds $\prst$-a.s.
\begin{equation} \label{MMM1}
\intTN{ \vr(t \wedge \tau , \cdot) \phi } -
\intTN{ \vr_0 \phi } = \int_0^{t \wedge \tau} \intTN{ \vr \vu \cdot \Grad \phi } \dt;
\end{equation}
\item For all $\bfphi \in \DC(\TN,R^N)$ and all $t\geq0$ the following holds $\prst$-a.s.
\begin{equation} \label{MMM2}
\begin{split}
&\intTN{ \vr \vu (t \wedge \tau , \cdot) \cdot \bfphi } -
\intTN{ (\vr \vu)_0 \cdot \bfphi } \\ &= \int_0^{t \wedge \tau} \intTN{ \left[ \vr \vu \otimes \vu : \Grad \bfphi + p(\vr) \Div \bfphi \right] } \dt
+ \int_0^{t \wedge \tau} \left( \intTN{ \vr \vc{G} \cdot \bfphi } \right) \D {W}.
\end{split}
\end{equation}

\end{enumerate}

\end{Definition}

\begin{Remark} \label{AdR1}

The processes (\ref{MMM1-}) are continuous and $(\mathfrak{F}_t)$-adapted; whence progressively measurable. Consequently,
the stochastic integral in (\ref{MMM1}) is correctly defined
as soon as $\vc{G} = \vc{G}(\vr, \vr \vu)$ satisfies (\ref{M1}).

\end{Remark}

We are ready to formulate our main result.

\begin{Theorem} \label{MT1}

Let $T > 0$ and
the initial data $\vr_0, (\vr \vu)_0$ be $\mathfrak{F}_0$-measurable such that
\begin{equation} \label{indata}
\vr_0 \in C^3(\TN), \ (\vr \vu)_0 \in C^3(\TN; R^N), \ \vr_0 > 0 \ \prst\mbox{-a.s.}
\end{equation}
Let the stochastic term satisfy \eqref{M1}, where $\beta$ is a standard Wiener process,
and the coefficient $\vc{G} \in W^{1,\infty}(\TN; R^N)$ is a given deterministic function.
Finally, suppose that the pressure function $p=p(\vr)$ satisfies
$$p\in C^1[0,\infty)\cap C^2(0,\infty),\ p(0)=0.$$
Then there exists a family of $\p$-a.s. strictly positive $(\mathfrak{F}_t)$-stopping times $\tau_M$ satisfying $\tau_{M} \leq \tau_L$ $\p$-a.s. for
$M \leq L$, and
\[
\tau_M \to \infty \ \mbox{as}\ M \to \infty \ \prst\mbox{-a.s.},
\]
such that problem \eqref{i1}, \eqref{i2}, \eqref{i5} admits infinitely many weak solutions with the stopping time
$\tau = \tau_M \wedge T$.

\end{Theorem}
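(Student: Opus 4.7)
The plan is to reduce the stochastic problem to a family of PDEs with random, but $(\mathfrak{F}_t)$-adapted, coefficients, then run a measurable version of the convex integration scheme of De~Lellis and Sz\'ekelyhidi. First I would eliminate the It\^o integral in \eqref{i2} by a change of unknown. In the multiplicative case $\vr \vc{G}(\vr,\vr\vu)\,\D W=\vr\vu\,\D\beta$, an application of It\^o's formula shows that $\vc{w}=\vr\vu\,\mathrm{e}^{-\beta(t)+t/2}$ satisfies a PDE in which the stochastic differential has disappeared, leaving only random coefficients depending continuously on $\beta(t)$; the additive case \eqref{i3} is handled analogously by subtracting a corrector built from $\vc{G}$ and $\beta$. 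From this point on, the system is interpreted pathwise for each $\omega$.

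Next, following \cite{Fei2016}, I rewrite the transformed system as an abstract Euler problem: prescribe $\vr$ as the solution of the continuity equation driven by a candidate velocity, then look for a momentum field whose traceless tensor part is subjected to a pointwise constraint of the form $\tfrac12 |\vc{w}|^2/\vr \leq e(t,x)$ for a suitable effective energy $e$. Subsolutions, as defined in Section \ref{sec:subsol}, are triples $(\vr,\vc{w},\mathbb{U})$ satisfying the linear parts of the momentum balance together with the strict inequality, and the functional $I$ from Section \ref{sub:var} measures the defect from being an actual solution. A Baire category argument on a suitable space of subsolutions will then produce a residual set on which $I=0$, thereby yielding infinitely many solutions.

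The randomness enters in a delicate way through the oscillatory lemma (Lemma \ref{OL1}): given a strict subsolution at time $t$, one must construct high-frequency plane-wave perturbations that decrease the defect by a definite amount. The admissible amplitude and frequency depend on a look-ahead $\delta>0$ determined by a modulus of continuity of the random coefficients, which in turn is controlled by the H\"older modulus of $\beta$. To make $\delta$ deterministic I would localise by stopping times of the form $\tau_M:=\inf\{t\geq 0\,:\,\|\beta\|_{C^a[0,t]}>M\}$ for some $a\in(0,1/2)$; since $\beta$ has a.s.\ H\"older continuous paths, $\tau_M\uparrow\infty$ a.s., and replacing $\beta$ by $\beta(\cdot\wedge\tau_M)$ confines the coefficients to a fixed compact set of H\"older functions, giving a uniform $\delta$. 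Adaptedness of the perturbation with respect to $(\mathfrak{F}_t)$ is then obtained by casting the set of admissible parameter choices as a closed-valued multifunction depending measurably on $\omega$ and applying the Ryll--Nardzewski selection theorem.

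The main obstacle I expect is precisely this measurable-selection step: one must verify that the multifunction whose selector yields the oscillatory correction has the appropriate closed-graph and weak-measurability properties \emph{uniformly} over the class of subsolutions, and one must ensure that the resulting perturbations preserve the $(\mathfrak{F}_t)$-adapted structure at every step of the iteration so that the Baire category framework can still be applied pathwise. Once Lemma \ref{OL1} is established in this adapted form, the standard iteration of \cite{DelSze3} produces a residual set of subsolutions on which $I=0$, whose cardinality is that of the continuum; undoing the change of variables of the first paragraph then delivers infinitely many weak solutions of \eqref{i1}--\eqref{i2} on $[0,\tau_M\wedge T)$ in the sense of Definition \ref{MD1}, all defined on the originally given stochastic basis and adapted to $(\mathfrak{F}_t)$.
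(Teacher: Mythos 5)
Your proposal follows essentially the same route as the paper: transform to a semi-deterministic system with random coefficients via It\^o's calculus, recast as an abstract Euler problem in the spirit of \cite{Fei2016}, set up the space of subsolutions and the functional $I$, run the Baire category argument, and handle the crucial adaptedness issue via Ryll--Nardzewski measurable selection localised by stopping times of the form $\inf\{t : \|\beta\|_{C^a[0,t]} > M\}$. The one cosmetic deviation is that for the linear multiplicative noise you use the integrating factor $e^{-\beta(t)+t/2}$, which absorbs the damping term, whereas the paper uses $e^{-\beta_M(t)}$ and carries the $-\tfrac{1}{2}e^{-\beta_M}\vr\vu$ drift explicitly; both lead to the same abstract problem.
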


\begin{Remark} \label{MR1}

Solutions obtained in Theorem \ref{MT1} are ``almost global'' in the sense that for any $\ep > 0$, problem (\ref{i1}), (\ref{i2}), (\ref{i5})
admits infinitely many (weak) solutions living on a given time interval $(0,T)$ with probability $1 - \ep$ (choosing $M$ large enough).

\end{Remark}

The rest of the paper is devoted to the proof of Theorem \ref{MT1}.
Let us now summarize the key points of our construction.
For both (\ref{i3}) and (\ref{i4}), we rewrite
(\ref{i1}), (\ref{i2}) as an \emph{abstract Euler system} with variable random coefficients in the spirit of \cite{Fei2016}.
On the set of subsolutions to this system we define the functional
\[
I[\vc{v}] = \expe{ \int_0^T \intTN{ \left[ \frac{1}{2} \frac{|\vc{v} + \vc{h}|^2}{r} - e \right] } \dt }.
\]
It is rather standard to see that $I$ has infinitely many continuity points and that
$I[\vc{v}]=0$ implies that $\vc{v}$ is a solution. The bulk is to show that each continuity point satisfies $I[\vc{v}]=0$ which implies the existence of infinitely many solutions. The latter statement can be shown indirectly by augmenting a given continuity point by rapidly oscillating increments. These increments are obtained by an adaptation of the deterministic method of convex integration developed by De Lellis and Sz\' ekelyhidi \cite{DelSze3}. The main difficulty is to ensure progressive measurability in this construction. Following \cite{DoFeMa} we proceed in three steps:
\begin{itemize}
\item[(i)] Assuming the subsolution under consideration is constant in space-time (but random)
we gain an oscillator sequence which is a random variable itself by the Ryll--Nardzewski theorem on measurable selection. This is first done on the unit interval with density equal to one  (see Lemma \ref{OL1}). A more genral version follows by scaling (see Lemma \ref{OL2}).
\item[(ii)] The construction from (i) can be extend to piecewise constant subsolutions which evaluated at the first time-point of each sub interval. This ensures progressive measurability of the oscillator sequence (see Lemma \ref{OL3}).
\item[(iii)] Finally, we consider the general case of continuous subsolutions  (see Lemma \ref{OL4}). They can be approximated by piecewise constant ones
and we can apply step (ii).
It is important that the modulus of continuity can be controlled. This is where the stopping times in the noise come into play.
\end{itemize}

\section{Transformation to a semi-deterministic setting}
\label{T}

In view of the difficulties mentioned in Section \ref{i}, we are forced to replace the original Wiener
process $\beta$ by a suitable truncation and to rewrite the problem in a semi-deterministic setting.

\subsection{Stopping times}
\label{ss1}

We start by fixing a family $(\tau_M)_{M\in\mathbb{N}}$ of stopping times enjoying the properties claimed in Theorem \ref{MT1}. For a given
$0 < a < \frac{1}{2}$ and the Wiener process $\beta$, $\beta(0) = 0$ $\prst$-a.s., we introduce
\[
O(t) = \sup_{ 0 \leq s \leq t} |\beta(s)| + \sup_{0\leq t_1 \ne t_2 \leq t} \frac{ |\beta(t_1) - \beta(t_2)| }{|t_1 - t_2|^a } \
\mbox{for}\ t > 0,\
O(0) = 0.
\]
{Obviously, $O$ is a non-decreasing stochastic process adapted to $(\mathfrak{F}_t)_{t\geq0}$.}
{Moreover, as $\beta$ is a Wiener process, it follows from the Kolmogorov continuity criterion that}
\[
| \beta(t_1) - \beta (t_2) | \leq B(T, b) |t_1 - t_2|^b = B(T,b) |t_1 - t_2|^{b-a}|t_1 - t_2|^a \ \mbox{whenever}\
0 \leq t_1, t_2 \leq T,
\]
{for any $0 < a < b < \tfrac{1}{2}$, $T > 0$, where $B(T,b)$ is random and finite $\prst$-a.s. In particular, we deduce that
that $O$ is continuous in $[0, \infty)$.}
As a consequence, for $M\in \mathbb{N}$
\[
\tau_M = \inf_{t \geq 0} \left\{ O(t) > M \right\}\wedge T
\]
defines  an $(\mathfrak{F}_t)$-stopping time.
Moreover, $\tau_M \leq \tau_L$ $\p$-a.s. for $M \leq L$, and in particular we get
\[
\tau_M \to \infty \ \mbox{as}\ M \to \infty \ \prst\mbox{-a.s.}
\]
Finally, as $O$ is continuous and $O(0)=0$ $\prst$-a.s., we have that
$\tau_M > 0$ $\prst$-a.s. for all $m\in\mathbb{N}$.

Next, let us introduce the stopped stochastic process
\[
W_M = \beta_M, \ \beta_M(t) = \beta( t \wedge \tau_M) \ \mbox{for}\ t \geq 0.
\]
We recall that for $\tau=\tau_M$, the stochastic integral in \eqref{MMM2} can be rewritten as
$$
\int_0^{t\wedge\tau_M}\left( \intTN{ \vr \vc{G} \cdot \bfphi } \right) \D {W}=\int_0^{t}\left( \intTN{ \vr \vc{G} \cdot \bfphi } \right) \D {W_M}.
$$
From now on, we consider problem (\ref{i1}), (\ref{i2}), (\ref{i5}) with $\beta$ replaced by $\beta_M$. Under these circumstances,
our task reduces
to showing Theorem \ref{MT1} with $\beta = \beta_M$ on the \emph{deterministic} time interval $[0,T]$. Note that the paths of $\beta_M$ are
\emph{uniformly} bounded and \emph{uniformly} H\" older continuous,
\begin{equation} \label{TTT1}
\| \beta_M \|_{C^a[0,T]} \leq M, \ 0<a<\tfrac{1}{2}\ \prst\text{-a.s.}
\end{equation}
This is the essential property we use to construct probabilistically strong solutions, that is, solutions  that are adapted to the given filtration $(\mathfrak{F}_t)_{t\geq0}$ associated to $\beta$.

\subsection{Problem with additive noise}

If the noise is given by (\ref{i3}), we may combine It\^{o}'s calculus with the equation of continuity (\ref{MMM1}) to rewrite the stochastic integral in the form
\[
\int_0^t \left( \intTN{ \vr \vc{G} \cdot \bfphi } \right) \D \beta_M =
 \left( \intTN{ \vr \vc{G} \cdot \bfphi } \right) \beta_M(t)  -
\int_0^t \beta_M(s) \intTN{ \vr \vu \cdot \Grad (\vc{G} \cdot \bfphi) } \ {\rm d}s.
\]
Consequently,
the momentum equation (\ref{i2}) can be formally written as
\begin{equation} \label{T1-}
\D (\vr \vu - \vr \beta_M \vc{G} ) + \Div(\vr \vu \otimes \vu) \dt + \Grad p(\vr) \dt = \beta_M \vc{G} \Div (\vr \vu) \dt,
\end{equation}
where no stochastic integration is necessary. Passing to the weak formulation, our task reduces to finding
$\vr$ and $\vr \vu$ such that
\begin{equation} \label{TT2}
\begin{split}
t &\mapsto \intTN{ \vr \phi }, \ t \mapsto \intTN{\vr \vu \cdot \bfphi }
\ \mbox{continuous and}\ (\mathfrak{F}_t)\mbox{-adapted},\\
\intTN{ \vr(0, \cdot) \phi } &= \intTN{ \vr_0 \phi },\  \intTN{ \vr \vu (0, \cdot) \cdot \bfphi } = \intTN{ (\vr \vu)_0 \cdot \bfphi }\\
& \mbox{for any smooth test functions}\ \phi, \bfphi,
\end{split}
\end{equation}
satisfying
\begin{equation} \label{T1}
\int_0^T
\intT{ \left[ \vr \partial_t \phi + \vr \vu \cdot \Grad \phi \right] } \dt = 0
\end{equation}
for any $\phi \in \DC((0,T) \times \TN)$;
\begin{equation} \label{T2}
\begin{split}
&\int_0^T \intT{ \left[ \left(  \vr \vu - \vr \beta_M \vc{G} \right) \cdot \partial_t \bfphi + \vr \vu \otimes \vu : \Grad \bfphi +
p(\vr) \Div \bfphi\right] } \dt \\
&= \int_0^T \intT{ \left[  \beta_M \vr \vu \cdot \Grad \vc{G} \cdot \bfphi + \beta_M \vr \vu \cdot \Grad \bfphi \cdot \vc{G} \right] } \dt
\end{split}
\end{equation}
for any $\bfphi \in \DC((0,T) \times \TN; R^N)$.

\begin{Remark} \label{TR1}

Problem (\ref{T1}), (\ref{T2}) can be viewed as a system of partial differential equations with random coefficients. We point out that all steps
leading from the original problem (\ref{MMM1}), (\ref{MMM2}) to (\ref{T1}), (\ref{T2}) are reversible as long as
$\vr$, $\vr \vu$ are weakly continuous $(\mathfrak{F}_t)$-adapted and It\^{o}'s calculus applies. In particular, it is enough to solve
(\ref{TT2})--(\ref{T2}).

\end{Remark}

\subsection{Problem with linear multiplicative noise (stochastic ``damping'')}

If the forcing is given by (\ref{i4}), we may again use It\^{o}'s calculus for $0 \leq t \leq \tau_M$ obtaining
\[
\D \exp \left( - \beta_M \right) = - \exp \left( - \beta_M \right) \D \beta_M + \frac{1}{2} \exp \left( -  \beta_M \right) \dt,
\]
and,
\[
\begin{split}
\exp \left( - \beta_M \right) &\left[ \D \left( \intTN{ \vr \vu \cdot \bfphi } \right)  - \left( \intTN{ \vr \vu \cdot \bfphi } \right)  \D \beta_M \right]
\\&= \D \left[ \exp \left( - \beta_M \right) \intTN{\vr \vu \cdot \bfphi }  \right] +  \frac{1}{2}\exp \left( - \beta_M \right) \intTN{ \vr \vu \cdot \bfphi } \dt.
\end{split}
\]
On the other hand, in accordance with (\ref{MMM2}),
\[
\D \left( \intTN{ \vr \vu \cdot \bfphi } \right)  - \left( \intTN{ \vr \vu \cdot \bfphi } \right)  \D \beta_M = \intTN{ \left[
\vr \vu \otimes \vu : \Grad \bfphi + p(\vr) \Div \bfphi \right] } \dt.
\]
We therefore conclude that
\[
\begin{split}
\D \left[ \exp \left( - \beta_M \right) \intTN{\vr \vu \cdot \bfphi }  \right]  &= - \frac{1}{2}\exp \left( - \beta_M \right) \intTN{ \vr \vu \cdot \bfphi } \dt
\\
&\quad+ \exp \left( - \beta_M \right) \intTN{ \left[
\vr \vu \otimes \vu : \Grad \bfphi + p(\vr) \Div \bfphi \right] } \dt.
\end{split}
\]

Similarly to the case of additive noise, we may replace (\ref{MMM1}), (\ref{MMM2}) by a system of partial differential equations with
random coefficients, the weak formulation of which reads
\begin{equation} \label{T3}
\int_0^T
\intT{ \left[ \vr \partial_t \phi + \vr \vu \cdot \Grad \phi \right] } \dt = 0
\end{equation}
for any $\varphi \in \DC((0,T) \times \TN)$;
\begin{align}
\nonumber
0&=\int_0^T \intT{ \Big[ \exp\left(- \beta_M \right) \vr \vu  \cdot \partial_t \bfphi + \exp\left(- \beta_M \right) \vr \vu \otimes \vu : \Grad \bfphi +
\exp\left(- \beta_M \right) p(\vr) \Div \bfphi \Big] } \dt \\
&\quad- \frac{1}{2} \int_0^T \intT{   \exp\left(- \beta_M \right) \vr \vu \cdot \bfphi  } \dt\label{T4}
\end{align}
for any $\bfphi \in \DC((0,T) \times \TN; R^N)$, where $\vr$, $\vr \vu$ are stochastic processes satisfying (\ref{TT2}).

\section{Abstract Euler problem}
\label{AE}

Our next goal is to rewrite the problems (\ref{TT2}), (\ref{T1}), (\ref{T2}) and (\ref{TT2}), (\ref{T3}), (\ref{T4}), respectively,
to fit the abstract framework introduced in \cite{Fei2016}. In addition to (\ref{indata}) we suppose that $\pas$
\begin{equation} \label{indata+}
\| \vr_0 \|_{C^3(\TN)} + \|(\vr \vu)_0 \|_{C^3(\TN; R^N)} + \left\| \vr_0^{-1} \right\|_{C(\TN)} \leq D
\end{equation}
for some deterministic constant $D > 0$. We claim that it is enough to show Theorem \ref{MT1} for the initial data satisfying
(\ref{indata+}). Indeed any initial data $\vr_0$, $(\vr \vu)_0$ satisfying (\ref{indata}) can be written as
\[
[\vr_0 , (\vr \vu)_0 ] = \lim_{D \to \infty}\ [\vr_{0, D}, (\vr \vu)_{0,D} ] \ \prst\mbox{-a.s.}
\]
where
\[
[\vr_{0, D}, (\vr \vu)_{0,D} ](\omega) = \left\{ \begin{array}{l} [\vr_0, (\vr \vu)_0 ](\omega)\ \mbox{if (\ref{indata+}) holds}  \\
{[ 1 , 0 ]} \ \mbox{otherwise.}   \end{array} \right.
\]
Let $[\vr_D, (\vr \vu)_D]$ be the solution emanating from the data $[\vr_{0, D}, (\vr \vu)_{0,D} ]$, the existence of which
is guaranteed by Theorem \ref{MT1}.
{We set}
\[
\Omega_D = \left\{ \omega \in \Omega \ \Big|\ [\vr_0, (\vr \vu)_0](\omega) \ \mbox{satisfies (\ref{indata+})} \right\}.
\]
Note that $\Omega_D$ is $\mathfrak{F}_0$-measurable for any $D>0$. Since
\[
[\vr_0, (\vr \vu)_0] = 1_{\Omega_1} [\vr_1, (\vr \vu)_1 ] + \sum_{D = 2}^{\infty} 1_{\Omega_{D} \setminus \Omega_{D-1}} [\vr_{0,D}, (\vr \vu)_{0,D}],
\]
the desired solution for arbitrary initial data satisfying
(\ref{indata}) can be obtained in the form
\[
[\vr, \vr \vu] = 1_{\Omega_1} [\vr_1, (\vr \vu)_1 ] + \sum_{D = 2}^{\infty} 1_{\Omega_{D} \setminus \Omega_{D-1}} [\vr_D, (\vr \vu)_D].
\]

\subsection{Additive noise}
\label{AEA1}

Going back to (\ref{T1}), (\ref{T2}) we write
\[
\vr \vu - \vr \beta_M \vc{G} = \vc{v} + \vc{V} + \Grad \Psi,
\]
where
\[
\Div \vc{v} = 0, \ \intTN{ \vc{v} } = 0, \ \vc{V} = \vc{V}(t) \in R^N \ \mbox{- a spatially homogeneous function.}
\]

\begin{Remark} \label{AdR2}

Note that $\vc{v} + \vc{V}$ represents the standard Helmholtz projection $\Pi_H$
of $\vr \vu - \vr \beta_M \vc{G}$ onto the space of solenoidal functions.

\end{Remark}

To meet the initial conditions (\ref{i5}), we fix
\[
\vc{v}(0, \cdot) = \Pi_H [(\vr \vu)_0 ] - \frac{1}{|\TN|} \intTN{ (\vr \vu)_0 },\
\vc{V} (0) = \frac{1}{|\TN|} \intTN{ (\vr \vu)_0 }, \ \Grad \Psi (0, \cdot) = \Pi^\perp_H [(\vr \vu)_0].
\]
Accordingly, the equation of continuity (\ref{T1}) reads
\begin{equation} \label{AE1}
\partial_t \vr + \Del \Psi + \beta_M \Div (\vr \vc{G}) = 0, \ \vr(0, \cdot) = \vr_0.
\end{equation}
Given $\Psi$, $\beta_M$, and $\vc{G}$, problem (\ref{AE1}) is uniquely solvable by the method of characteristics. Moreover, as $\beta_M$ satisfies
(\ref{TTT1}) and $\vr_0$ is strictly positive uniform in $\Omega$, we may \emph{fix} the potential $\Psi$ and subsequently the density $\vr$ in such a way that
\begin{equation} \label{AE2}
\begin{split}
\Psi & \in C^2([0,T]; C^3(\TN)) \ \pas,\  \Psi(t, \cdot) \ \mathfrak{F}_0\mbox{-measurable for any}\ t,\\
&\qquad\qquad
\| \Psi \|_{C^2([0,T]; C^3(\TN))} \leq c_M \ \pas,\\
\vr & \in C^1([0,T]; C^1(\TN))\ \pas,\ \vr(0, \cdot) = \vr_0,\ \vr \ (\mathfrak{F}_t)\mbox{-adapted}, \\
&\qquad\qquad
\| \vr \|_{C^1([0,T]; C^1(\TN))} \leq c_M, \ \vr \geq \frac{1}{c_M}\  \pas,
\end{split}
\end{equation}
where $c_M > 0$ is a deterministic constant depending on the stopping parameter $M$. Here, we have also used the extra hypothesis
(\ref{indata+}).

Having fixed $\vr$ and $\Psi$, we compute $\vc{V}$ as the unique solution of the differential equation
\[
\frac{{ \rm d} \vc{V}}{{\rm d}t} = - \frac{1}{|\TN|} \intTN{ \left[ \vr \beta^2_M \Grad \vc{G} \cdot \vc{G} + \beta_M \Grad \vc{G} \cdot \Grad \Psi \right]
},\ \vc{V}(0) = \frac{1}{|\TN|} \intTN{ (\vr \vu)_0 }.
\]
In view of (\ref{AE2}) and the assumption $\vc{G} \in W^{1,\infty}(\TN; R^N)$ we easily deduce that
\begin{equation} \label{AE3}
\vc{V} \in C^1([0,T]; R^N)\ \pas, \vc{V} \ \mbox{is}\ (\mathfrak{F}_t)\mbox{-adapted},\
\| \vc{V} \|_{C^1([0,T]; R^N)} \leq c_M\ \pas
\end{equation}

Thus it remains to find $\vc{v}$ to satisfy (\ref{T2}). It turns out that $\vc{v}$ must be a weak solution of the abstract Euler system
\[
\begin{split}
\partial_t \vc{v} & + \Div \left( \frac{(\vc{v} + \vr \beta_M \vc{G} + \vc{V} + \Grad \Psi) \otimes (\vc{v} +
\vr \beta_M \vc{G} + \vc{V} + \Grad \Psi)}
{\vr} \right) \\ &= - \Grad p(\vr) - \partial_t \Grad \Psi+\beta_M \Div \left( \vr \beta_M\vc{G} + \Grad \Psi \right) \vc{G}
- \frac{1}{|\TN|} \intTN{ \beta_M \Div \left( \vr \beta_M\vc{G} + \Grad \Psi \right) \vc{G} },
\end{split}
\]
\[
\Div \vc{v} = 0, \ \vc{v}(0, \cdot) = \vc{v}_0 = \Pi_H [(\vr \vu)_0 ] - \frac{1}{|\TN|} \intTN{ (\vr \vu)_0 }.
\]

Finally, we solve the elliptic system
\begin{equation} \label{AE4}
\begin{split}
&\Div \left[ \Grad \vc{m} + \Grad^t \vc{m} - \frac{2}{N} \Div \vc{m} \mathbb{I} \right]\\
&= \Grad p(\vr)  + \partial_t \Grad \Psi - \beta_M \Div \left( \vr \beta_M\vc{G} + \Grad \Psi \right) \vc{G}
+ \frac{1}{|\TN|} \intTN{ \beta_M \Div \left( \vr \beta_M\vc{G} + \Grad \Psi \right) \vc{G} }.
\end{split}
\end{equation}
Note that (\ref{AE4}) admits a unique solution as the right-hand side is a function of zero mean. Consequently, setting
\[
r = \vr, \ \vc{h} = \vr \beta_M \vc{G} + \vc{V} + \Grad \Psi, \ \mathbb{M} = \Grad \vc{m} + \Grad^t \vc{m} - \frac{2}{N} \Div \vc{m} \mathbb{I}
\]
we may rewrite the problem in a concise form:
\begin{equation} \label{AE5}
\partial_t \vc{v} + \Div \left[ \frac{ (\vc{v} + \vc{h}) \otimes (\vc{v} + \vc{h}) }{r} + \mathbb{M} \right] = 0,
\ \Div \vc{v} = 0, \ \vc{v}(0, \cdot) = \vc{v}_0,
\end{equation}
where
\begin{equation} \label{AE6}
\begin{split}
\vc{v}_0 &\in C^1(\TN; R^N)\ \pas, \ \Div \vc{v}_0 = 0, \ \intTN{ \vc{v}_0 } = 0, \ \vc{v}_0\ \mbox{is} \ \mathfrak{F}_0\mbox{-measurable},\\
&\qquad\qquad
\| \vc{v}_0 \|_{C^1(\TN; R^N)} \leq c_M\ \pas,
\\
\vc{h} &\in C^a ([0,T]; C^1(\TN; R^N))\ \pas, \ \vc{h}\ \mbox{is} \ (\mathfrak{F}_t)\mbox{-adapted},\\
&\qquad\qquad
\| \vc{h} \|_{C^a ([0,T]; C^1(\TN; R^N))} \leq c_M\ \pas, \\
r &\in C^a ([0,T]; C^1(\TN))\ \pas, \ r\ \mbox{is} \ (\mathfrak{F}_t)\mbox{-adapted},\\
&\qquad\qquad
\| r \|_{C^a ([0,T]; C^1(\TN))} \leq c_M, \ \frac{1}{r} \geq \frac{1}{c_M}\ \pas,\\
\mathbb{M} &\in C^a ([0,T]; C^1(\TN; R^{N \times N}_{0, {\rm sym}}))\ \pas, \ \mathbb{M}\ \mbox{is} \ (\mathfrak{F}_t)\mbox{-adapted},\\
&\qquad\qquad
\| \mathbb{M} \|_{C^a ([0,T]; C^1(\TN; R^{N \times N}_{0, {\rm sym}} ))} \leq c_M\ \pas,
\end{split}
\end{equation}
are given data.

\subsection{Multiplicative noise}

Mimicking the steps of the previous section we write
\[
\exp(-\beta_M) \vr \vu = \vc{v} + \vc{V} + \Grad \Psi
\]
in (\ref{T4}),
where
\[
\Div \vc{v} = 0, \ \intTN{ \vc{v} } = 0, \ \vc{V} = \vc{V}(t) \in R^N \ \mbox{is a spatially homogeneous function,}
\]
and
\[
\vc{v}(0, \cdot) = \Pi_H [(\vr \vu)_0 ] - \frac{1}{|\TN|} \intTN{ (\vr \vu)_0 },\
\vc{V} (0) = \frac{1}{|\TN|} \intTN{ (\vr \vu)_0 }, \ \Grad \Psi (0, \cdot) = \Pi^\perp_H [(\vr \vu)_0].
\]
Accordingly, the equation of continuity reads
\begin{equation} \label{AE7}
\partial_t \vr + \Div \left( \exp(\beta_M) \Grad \Psi \right) = 0, \ \vr(0, \cdot) = \vr_0.
\end{equation}

Next, we fix $\vc{V}$ as the unique solution of
\[
\frac{{\rm d}\vc{V}}{\dt} + \frac{1}{2} \vc{V} = 0, \ \vc{V}(0) = \frac{1}{|\TN|} \intTN{ (\vr \vu)_0 }.
\]
Accordingly, the momentum equation can be written as
\begin{align} \nonumber
\partial_t \vc{v} &+  \exp( \beta_M) \left[ \Div \frac{\big( \vc{v} + \vc{V} + \Grad \Psi\big) \otimes \big(\vc{v} + \vc{V} + \Grad \Psi\big) }{\vr} \right]+ \exp(- \beta_M) \Grad p(\vr) + \partial_t \Grad \Psi + \frac{1}{2} \Grad \Psi  \\ &= - \frac{1}{2} \vc{v} ,\
\Div \vc{v} = 0, \ \vc{v}_0 = \Pi_H[(\vr \vu)_0] - \frac{1}{|\TN|}\int_{\TN} (\vr \vu)_0 \dx.\label{A9b}
\end{align}
Similarly to the above, we can fix $\vr$, $\Psi$ to satisfy (\ref{AE7}) together with (\ref{AE2}).

Finally, seeing that $\intTN{ \vc{v} } = 0$, we may solve an analogue to the elliptic system (\ref{AE4}), namely,
\begin{equation} \label{AE8}
\begin{split}
\Div &\left[ \Grad \vc{m} + \Grad^t \vc{m} - \frac{2}{N} \Div \vc{m} \mathbb{I} \right] \\
&= \exp (-\beta_M) \Grad p(\vr) + \partial_t \Grad \Psi + \frac{1}{2} \Grad \Psi + \frac{1}{2} \vc{v}.
\end{split}
\end{equation}
Note that, in contrast with (\ref{AE4}), the solution $\vc{m} = \vc{m}[\vc{v}]$ depends on $\vc{v}$.

Similarly to (\ref{AE5}) we can write the final problem (setting $\vc{h}=\vc{V} + \Grad \Psi$ and $r=\vr$):
\begin{equation} \label{AE9}
\partial_t \vc{v} + \Div \left[ \frac{ (\vc{v} + \vc{h}) \otimes (\vc{v} + \vc{h}) }{r} + \mathbb{M}[\vc{v}] \right] = 0,
\ \Div \vc{v} = 0, \ \vc{v}(0, \cdot) = \vc{v}_0,
\end{equation}
where
\begin{align}
\vc{v}_0 &\in C^1(\TN; R^N)\ \pas, \ \Div \vc{v}_0 = 0, \ \intTN{ \vc{v}_0 } = 0, \ \vc{v}_0\ \mbox{is}  \ \mathfrak{F}_0\mbox{-measurable},\\
&\qquad\qquad
\| \vc{v}_0 \|_{C^1(\TN; R^N)} \leq c_M\ \pas,\nonumber
\\
\vc{h} &\in C^a ([0,T]; C^1(\TN; R^N))\ \pas, \ \vc{h}\ \mbox{is}  \ (\mathfrak{F}_t)\mbox{-adapted},\\
&\qquad\qquad
\| \vc{h} \|_{C^a ([0,T]; C^1(\TN; R^N))} \leq c_M\ \pas, \label{AE10}\\
r &\in C^a ([0,T]; C^1(\TN))\ \pas, \ r\ \mbox{is}  \ (\mathfrak{F}_t)\mbox{-adapted},\\
&\qquad\qquad
\| r \|_{C^a ([0,T]; C^1(\TN)} \leq c_M, \ \frac{1}{r} \geq \frac{1}{c_M}\ \pas,\nonumber
\end{align}
and
\[
\mathbb{M} = \mathbb{M}[\vc{v}] = \Grad \vc{m} + \Grad^t \vc{m} - \frac{2}{N} \Div \vc{m} \mathbb{I}
\]
is the unique solution of the elliptic system (\ref{AE8}).

\begin{Remark}\label{AdR3}

Note that $\vc{h}$ is actually more regular than in Section \ref{AEA1}.

\end{Remark}

\section{Convex integration}
\label{CI}

Problems (\ref{AE5}) and (\ref{AE9}) can be solved pathwise using the method of De Lellis and Sz\' ekelyhidi \cite{DelSze3}, with the necessary modifications
developed in \cite{Fei2016}. In such a way, we would obtain the existence of (infinitely many) solutions in the semi-deterministic spirit introduced by
Bensoussan and Temam \cite{BenTem}. More specifically, solutions obtained this way would be random variables, meaning $\mathfrak{F}$-measurable but not necessarily $(\mathfrak{F}_t)$-adapted (progressively measurable). Obviously, such a semi-deterministic result would hold without any restriction imposed by the stopping times. Progressive measurability of $\vr$, $\vr \vu$ claimed in Theorem \ref{MT1} represents a non-trivial issue that requires
a careful revisiting of the method of convex integration presented in \cite{DelSze3}. The main ingredient is a stochastic variant of the so-called \emph{oscillatory lemma} shown in the present section.

\begin{Definition} \label{DC1}

Let $G: \Omega \to X$ be a (Borelian) random variable ranging in a topological space $X$. We say that $G$ has a compact range in $X$ if there
is a {(deterministic)} compact set $\mathcal{K} \subset X$ such that $G \in \mathcal{K}$ a.s.

\end{Definition}

\subsection{Geometric setting}

Let $R^{N\times N}_{{\rm sym}}$ denote the space of symmetric $N \times N$ matrices and let $R^{N\times N}_{0,{\rm sym}}$ be its subspace of
traceless matrices. Following the ansatz of \cite[Lemma 3]{DelSze3} we
consider the set
\[
\mathcal{S}[e] = \left\{ [\vc{w}, \mathbb{H}] \ \Big| \ \vc{w} \in R^N, \ \mathbb{H} \in R^{N \times N}_{0, {\rm sym}},\
\frac{N}{2} \lambda_{\rm max} \left[ \vc{w} \otimes \vc{w} - \mathbb{H} \right] < e \right\},
\]
where $\lambda_{\rm max}[ \mathbb{A}]$ denotes the maximal eigenvalue of a symmetric matrix $\mathbb{A}$.
Thanks to the algebraic inequality
\begin{equation} \label{AlIn}
\frac{N}{2} \lambda_{\rm max} \left[ \vc{w} \otimes \vc{w} - \mathbb{H} \right] \geq \frac{1}{2} |\vc{w}|^2,\ \mathbb{H} \in R^{N \times N}_{0, {\rm sym}},
\end{equation}
$\mathcal{S}[e] \ne \emptyset$ only if $e > 0$. {In addition, we have}
\begin{equation} \label{AlIn2}
\frac{N-1}{2} \lambda_{\rm max} \left[ \vc{w} \otimes \vc{w} - \mathbb{H} \right] \geq \frac{1}{2} |\tn{H}|^2,\ \vc{w} \in R^N,
\end{equation}
see \cite[Lemma 3 iii)]{DelSze3}.
{Thus, for given $e > 0$}, $\mathcal{S}[e]$ is a convex open and bounded subset
of $R^N \times R^{N \times N}_{0, {\rm sym}}$. Moreover, as shown in \cite{DelSze3},
\[
\partial \mathcal{S}[e] = \left\{ \left[ \vc{a}, \ \vc{a} \otimes \vc{a} - \frac{1}{N} |\vc{a}|^2 \mathbb{I}\right]\ \Big| \
\frac{1}{2} |\vc{a}|^2 = e \right\}.
\]

{De Lellis and Sz\' ekelyhidi \cite[Lemma 6]{DelSze3} proved the following result. Given $e > 0$ and
$[\vc{w}, \mathbb{H}] \in \mathcal{S}[e]$, there exist $\vc{a}, \vc{b} \in R^N$ enjoying the following properties:}
\begin{itemize}
\item we have that
\begin{equation} \label{c1A}
\frac{1}{2}|\vc{a}|^2 = \frac{1}{2}|\vc{b}|^2 = e;
\end{equation}
\item
{there exists $L \geq 0$ such that}
for $\vc{s} = \vc{a} - \vc{b}$, $\mathbb{M} = \vc{a} \otimes \vc{a} - \vc{b}  \otimes \vc{b}$, we have
\begin{equation} \label{c2A}
\begin{split}
[ \vc{w} + \lambda \vc{s}, \mathbb{H} + \lambda \mathbb{M} ] &\in \mathcal{S}[e] ,\\
{\rm dist} \left[ [ \vc{w} + \lambda \vc{s}, \mathbb{H} + \lambda \mathbb{M} ]; \partial \mathcal{S}[e] \right]  &\geq \frac{1}{2}
{\rm dist} \left[ [ \vc{w}, \mathbb{H} ]; \partial \mathcal{S}[e] \right]
\end{split}
\end{equation}
{for all $\lambda \in [-L,L]$};
\item
{
there is a universal constant $c(N)$ depending only on the dimension such that}
\begin{equation} \label{c3A}
L |\vc{s}| \geq c(N) \frac{1}{\sqrt{e}} \left( e - \frac{1}{2} |\vc{w}|^2 \right);
\end{equation}
\item we have that
\begin{equation} \label{C3AB}
\left| \vc{a} \pm \vc{b} \right| \geq \chi \left( {\rm dist} \left[ [ \vc{w}, \mathbb{H} ]; \partial \mathcal{S}[e] \right] \right),
\end{equation}
{where $\chi$ is positive for positive arguments.}
\end{itemize}

{
Motivated by this result,}
we consider a set-valued mapping
\[
\mathcal{F} : (0, \infty) \times R^N \times R^{N \times N}_{0, {\rm sym}} \to 2^{R^N \times R^N}
\]
determined by the following properties:
\begin{enumerate}
\item whenever
$[\vc{w}, \mathbb{H}] \notin \Ov{ \mathcal{S}[e] }$ we have
\begin{equation} \label{c1-}
\mathcal{F}(e, \vc{w}, \mathbb{H} ) = \left\{ [\vc{w}, \vc{w}] \right\};
\end{equation}
\item
If $[\vc{w}, \mathbb{H}] \in \Ov{ \mathcal{S}[e]}$, then
$[\vc{a}, \vc{b}] \in \mathcal{F}(e, \vc{w}, \mathbb{H})$ if {and only if}:
\begin{itemize}
\item we have that
\begin{equation} \label{c1}
\frac{1}{2}|\vc{a}|^2 = \frac{1}{2}|\vc{b}|^2 = e;
\end{equation}
\item
there exists $L \geq 0$ such that
for $\vc{s} = \vc{a} - \vc{b}$, $\mathbb{M} = \vc{a} \otimes \vc{a} - \vc{b}  \otimes \vc{b}$, we have
\begin{equation} \label{c2}
\begin{split}
[ \vc{w} + \lambda \vc{s}, \mathbb{H} + \lambda \mathbb{M} ] &\in \mathcal{S}[e] ,\\
{\rm dist} \left[ [ \vc{w} + \lambda \vc{s}, \mathbb{H} + \lambda \mathbb{M} ]; \partial \mathcal{S}[e] \right]  &\geq \frac{1}{2}
{\rm dist} \left[ [ \vc{w}, \mathbb{H} ]; \partial \mathcal{S}[e] \right]
\end{split}
\end{equation}
for all $\lambda \in [-L,L]$;
\item we have that
\begin{equation} \label{c3}
L |\vc{s}| \geq c(N) \frac{1}{\sqrt{e}} \left( e - \frac{1}{2} |\vc{w}|^2 \right),
\end{equation}
{where $c(N)$ is the universal constant from (\ref{c3A});}
\begin{equation} \label{c3ab}
\left| \vc{a} \pm \vc{b} \right| \geq \chi \left( {\rm dist} \left[ [ \vc{w}, \mathbb{H} ]; \partial \mathcal{S}[e] \right] \right),
\end{equation}
{where $\chi$ has been introduced in (\ref{C3AB}).}
\end{itemize}

\end{enumerate}

{Basic properties of $\mathcal{F}$ are summarized in the following lemma.}

\begin{Lemma} \label{L1}

{
For any $(e,\bfw,\mathbb H)\in (0, \infty) \times R^N \times R^{N \times N}_{0, {\rm sym}}$ the set $\mathcal{F}(e, \vc{w}, \mathbb{H})$} is non-empty, closed, and contained in a compact set, the size of which depends only on $e$ and
$|\vc{w}|$.
Moreover, the mapping
\[
\mathcal{F} : (0, \infty) \times R^N \times R^{N \times N}_{0, {\rm sym}} \to 2^{R^N \times R^N}
\]
has closed graph.

\end{Lemma}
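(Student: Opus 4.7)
My plan is to prove the lemma by case analysis on the position of $[\vc{w},\mathbb{H}]$ relative to the convex body $\overline{\mathcal{S}[e]}$, settling non-emptiness and boundedness first, then deducing closedness of $\mathcal{F}(e,\vc{w},\mathbb{H})$ and closedness of its graph by a unified sequential compactness argument. The exterior case $[\vc{w},\mathbb{H}]\notin\overline{\mathcal{S}[e]}$ is trivial, since by \eqref{c1-} the set $\mathcal{F}(e,\vc{w},\mathbb{H})=\{[\vc{w},\vc{w}]\}$ is a non-empty closed singleton. In the interior case $[\vc{w},\mathbb{H}]\in\mathcal{S}[e]$, I would apply the De Lellis--Sz\'ekelyhidi result quoted in \eqref{c1A}--\eqref{C3AB} to immediately produce a pair $[\vc{a},\vc{b}]$ together with $L\geq 0$ satisfying \eqref{c1}--\eqref{c3ab}. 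The boundary case $[\vc{w},\mathbb{H}]\in\partial\mathcal{S}[e]$, where necessarily $|\vc{w}|^2/2=e$ and $\mathbb{H}=\vc{w}\otimes\vc{w}-\frac{1}{N}|\vc{w}|^2\mathbb{I}$, is handled by the degenerate choice $\vc{a}=\vc{b}=\vc{w}$, $L=0$: relation \eqref{c1} becomes an equality, \eqref{c3} reduces to $0\geq 0$ because $e=|\vc{w}|^2/2$, and \eqref{c3ab} is trivial since ${\rm dist}[[\vc{w},\mathbb{H}];\partial\mathcal{S}[e]]=0$.

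Boundedness then follows at once from \eqref{c1}: in the interior and boundary branches $|\vc{a}|=|\vc{b}|=\sqrt{2e}$, while in the exterior branch $|\vc{a}|=|\vc{b}|=|\vc{w}|$. Hence $\mathcal{F}(e,\vc{w},\mathbb{H})$ is contained in the closed ball of $R^N\times R^N$ of radius $\max(\sqrt{2e},|\vc{w}|)$, a compact set depending only on $e$ and $|\vc{w}|$.

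For closedness of $\mathcal{F}(e,\vc{w},\mathbb{H})$ together with closedness of its graph I would argue in one go. Picking a sequence $(e_n,\vc{w}_n,\mathbb{H}_n,\vc{a}_n,\vc{b}_n)\to(e,\vc{w},\mathbb{H},\vc{a},\vc{b})$ with $[\vc{a}_n,\vc{b}_n]\in\mathcal{F}(e_n,\vc{w}_n,\mathbb{H}_n)$, the continuity of $(e,\vc{w},\mathbb{H})\mapsto {\rm dist}[[\vc{w},\mathbb{H}];\partial\mathcal{S}[e]]$ allows me to assume that for $n$ large all points lie in the same branch (exterior, interior, or boundary) as the limit. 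In the interior branch, there are parameters $L_n\geq 0$ verifying \eqref{c2}--\eqref{c3ab}. The crucial step is to observe that \eqref{c2}, combined with boundedness of $\mathcal{S}[e]$ by a constant depending only on $e$, forces the reparametrized quantities $L_n(\vc{a}_n-\vc{b}_n)$ and $L_n(\vc{a}_n\otimes\vc{a}_n-\vc{b}_n\otimes\vc{b}_n)$ to stay uniformly bounded. Extracting a convergent subsequence, I can then pass to the limit in \eqref{c1}--\eqref{c3ab} using continuity of $\chi$ and of the distance functional.

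The hard part will be the behavior of $L_n$ itself, which is not \emph{a priori} bounded. If $\vc{a}-\vc{b}\neq 0$, the bound on $L_n(\vc{a}_n-\vc{b}_n)$ yields $L_n\to L^*<\infty$ along a subsequence, so the limit belongs to $\mathcal{F}(e,\vc{w},\mathbb{H})$ with the parameter $L^*$. The delicate degeneracy $\vc{a}=\vc{b}$ in the interior branch is ruled out by \eqref{c3ab}: passing to the limit would force $0\geq\chi({\rm dist}[[\vc{w},\mathbb{H}];\partial\mathcal{S}[e]])>0$, which contradicts the strict positivity of $\chi$ on positive arguments. On the boundary the same degeneracy is admissible, and the limit pair $[\vc{a},\vc{a}]$ satisfies the required relations with $L=0$, exactly as in the non-emptiness argument above, completing the proof.
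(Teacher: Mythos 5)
Your argument follows the paper's proof essentially verbatim: non-emptiness via the cited De Lellis--Sz\'ekelyhidi Lemma~6 in the interior case and the trivial pair $[\vc{w},\vc{w}]$ on the boundary and in the exterior, boundedness read off from (\ref{c1}) and (\ref{c1-}), and a sequential compactness argument for the closed graph that the paper compresses into the phrase ``conditions (\ref{c1})--(\ref{c3ab}) are invariant with respect to strong convergence.'' One small inaccuracy worth flagging: your reduction ``for $n$ large all points lie in the same branch as the limit'' is not justified when the limit lands on $\partial\mathcal{S}[e]$, since the approximating points $[\vc{w}_n,\mathbb{H}_n]$ may well lie in the open interior while $[\vc{a}_n,\vc{b}_n]$ remain non-degenerate; your later treatment of the boundary branch assumes $\vc{a}=\vc{b}$ there, which then does not cover this mixed case. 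The paper sidesteps this by asserting invariance under strong convergence without unfolding the cases, so this is more a lacuna in the spelled-out version than a departure from the paper's route; the overall strategy, decomposition, and supporting ingredients (including the reparametrization bound $L_n(\vc{a}_n-\vc{b}_n)$ controlled by the boundedness of $\mathcal{S}[e]$ and the non-degeneracy forced by (\ref{c3ab}) in the interior) are the same.
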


\begin{proof}

As shown \cite[Lemma 6]{DelSze3}, the set $\mathcal{F}(e, \vc{w}, \mathbb{H})$
is non-empty for any $[\vc{w}, \mathbb{H}] \in \mathcal{S}[e]$ for a certain universal constant $c(N)$. If
$[\vc{w}, \mathbb{H}] \in \partial \mathcal{S}[e]$, then
\[
\frac{1}{2} |\vc{w}|^2 = e,
\]
and, consequently, $\mathcal{F}(e,\vc{w}, \mathbb{H})$ contains at least the trivial point $[\vc{w},\vc{w}]$. Obviously,
$\mathcal{F}(e,\vc{w}, \mathbb{H})$ is closed and bounded; whence compact.

Closedness of the graph follows by the standard compactness argument as the target space is locally compact,
{and conditions (\ref{c1})--(\ref{c3ab}) are invariant with respect to strong convergence.}
\end{proof}

\begin{Remark} \label{cR1}

The mapping assigns to any point $[\vc{w}, \mathbb{H}] \in \mathcal{S}[e]$ a segment $[ \vc{w} + \lambda \vc{s}, \mathbb{H} + \lambda \mathbb{M} ]$,
$\lambda \in [-L,L]$ that has ``maximal'' length and still belongs to the set $\mathcal{S}[e]$. Solutions constructed later by the method of convex integration
``oscillate'' along segments of this type.

\end{Remark}

Let $(\Omega, \mathfrak{F}, \prst)$ be a probability space endowed with a complete $\sigma$-algebra of measurable sets
$\mathfrak{F}$. Suppose now that
\[
[e, \vc{w} , \mathbb{H} ] \ \mbox{is an}\ \left[\mathfrak{F}, \mathfrak{B}[ (0, \infty) \times R^N \times R^{N \times N}_{0, {\rm sym}}] \right]
\mbox{-measurable random variable},
\]
where the symbol $\mathfrak{B}$ denotes the $\sigma$-algebra of Borel sets. Our goal is to show that the composed mapping $\mathcal{F}(e, \vc{w},
\mathbb{H})$, considered now as a (set-valued) random
variable, admits an $\mathfrak{F}$-measurable selection. To this end, we recall the celebrated
Kuratowski and Ryll--Nardzewski theorem, {see
e.g. the survey by Wagner \cite{Wag}. }

\begin{Theorem} \label{TRN}
Let $(X,\mathcal A,\mu)$ be a measure space
with a (complete) $\sigma$-algebra of measurable sets $\mathcal{A}$.
Let
\[
\mathcal{H}: X \to 2^Y
\]
be a set valued mapping, where $Y$ is a Polish space with the $\sigma$-algebra of Borel sets $\mathfrak{B}$.
Suppose that for all $x\in X$
\[
\mathcal{H}(x) \ \mbox{is a nonempty and closed subset of} \ Y,
\]
and that $\mathcal{F}$ is weakly measurable, meaning
\[
\left\{ x \ \Big| \ \mathcal{H}(x) \cap B \ne \emptyset \right\} \in \mathfrak{F}
\]
for any open set $B \subset Y$.

Then $\mathcal{H}$ admits an $\mathcal{A}$--$\mathfrak{B}$ measurable selection, meaning a single valued $\mathcal{A}$--$\mathfrak{B}$
measurable mapping
$H: X \to Y$ such that
\[
H(x) \in \mathcal{H}(x), \ x \in X.
\]

\end{Theorem}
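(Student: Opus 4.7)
The plan is to follow the classical construction due to Kuratowski and Ryll--Nardzewski: approximate the desired selection by a pointwise Cauchy sequence of $\mathcal{A}$--$\mathfrak{B}$ measurable, countably-valued maps and pass to the limit, exploiting completeness of $Y$ and closedness of the fibres $\mathcal{H}(x)$. Fix a compatible complete metric $d$ on the Polish space $Y$ and a countable dense sequence $\{y_k\}_{k \in \mathbb{N}} \subset Y$. The key observation is that weak measurability gives, for every $k,n$,
\[
C_{n,k} = \bigl\{ x \in X : \mathcal{H}(x) \cap B(y_k, 2^{-n}) \neq \emptyset \bigr\} \in \mathcal{A},
\]
since $B(y_k, 2^{-n})$ is open in $Y$. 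By density of $\{y_k\}$ and nonemptiness of the fibres, $\bigcup_k C_{n,k} = X$ for each $n$.

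The main step is an inductive construction of $\mathcal{A}$--$\mathfrak{B}$ measurable maps $H_n : X \to Y$ taking values in $\{y_k\}$ and satisfying the two conditions
\[
d\bigl(H_n(x), \mathcal{H}(x)\bigr) := \inf_{y \in \mathcal{H}(x)} d\bigl(H_n(x), y\bigr) < 2^{-n}, \qquad d\bigl(H_{n+1}(x), H_n(x)\bigr) < 2^{-n+1}
\]
for every $x \in X$. The base case is obtained by setting $H_0(x) = y_{k(x)}$ with $k(x) = \min\{k : x \in C_{0,k}\}$; this is measurable because $\{k(x)=k\} = C_{0,k} \setminus \bigcup_{j<k} C_{0,j} \in \mathcal{A}$. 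For the inductive step, given $H_n$, partition $X$ into the measurable pieces $E_j = H_n^{-1}(\{y_j\})$ and, on each $E_j$, define $H_{n+1}(x) = y_{\ell(x)}$ with $\ell(x) = \min\bigl\{\ell : x \in C_{n+1,\ell},\ d(y_\ell, y_j) < 2^{-n+1} \bigr\}$. Existence of such $\ell$ for $x \in E_j$ follows because there is $z \in \mathcal{H}(x)$ with $d(y_j, z) < 2^{-n}$, whence density furnishes $y_\ell$ with $d(y_\ell, z) < 2^{-(n+1)}$; consequently $x \in C_{n+1,\ell}$ and $d(y_\ell, y_j) < 2^{-n} + 2^{-(n+1)} < 2^{-n+1}$. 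Measurability of $\ell(\cdot)$ on $E_j$ is ensured by writing $\{\ell(x)=\ell\}$ as a difference of countably many sets from $\mathcal{A}$.

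The second condition forces $(H_n(x))_{n\in \mathbb{N}}$ to be Cauchy in $(Y,d)$ uniformly in $x$ (the telescoping sum is dominated by $\sum_{n\geq 0} 2^{-n+1} < \infty$), so completeness of $Y$ yields a pointwise limit $H(x) = \lim_n H_n(x)$. As a pointwise limit of $\mathcal{A}$--$\mathfrak{B}$ measurable maps into a metric space, $H$ is $\mathcal{A}$--$\mathfrak{B}$ measurable. The first condition together with closedness of $\mathcal{H}(x)$ gives $H(x) \in \mathcal{H}(x)$ for every $x$, producing the desired selection.

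The main obstacle is the measure-theoretic bookkeeping in the inductive step: one must simultaneously enforce approximation of the fibre and continuity with the previous iterate, while ensuring that the ``first index'' prescription defining $\ell(x)$ is $\mathcal{A}$-measurable on each piece $E_j$. This is precisely what weak measurability of $\mathcal{H}$ provides, applied separately to each open ball $B(y_k, 2^{-n})$. Once this bookkeeping is in place, the remainder reduces to standard completeness and closedness considerations in Polish spaces.
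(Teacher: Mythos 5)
The paper does not prove Theorem \ref{TRN}; it merely states it and cites Wagner's survey \cite{Wag} for a reference, so there is no ``paper's own proof'' to compare against. Your argument is the standard Kuratowski--Ryll-Nardzewski construction and it is correct: the sets $C_{n,k}$ lie in $\mathcal{A}$ by weak measurability applied to the open balls $B(y_k,2^{-n})$; the base step uses $\bigcup_k C_{0,k}=X$ (density plus nonemptiness of the fibres); the inductive step correctly shows existence of the index $\ell(x)$ via the triangle inequality $d(y_\ell,y_j)\le d(y_\ell,z)+d(z,y_j)<2^{-(n+1)}+2^{-n}<2^{-n+1}$ and measurability of $\ell(\cdot)$ on each piece $E_j$; the uniform Cauchy estimate, completeness of $(Y,d)$, Borel-measurability of the pointwise limit, and closedness of $\mathcal{H}(x)$ then deliver the selection. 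This matches the proof one finds in Wagner's survey, so the approach is the intended one.
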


As both spaces $(0, \infty) \times R^N \times R^{N \times N}_{0, {\rm sym}}$ and $R^N \times R^N$ are finite dimensional, compactness of the
range of $\mathcal{F}$ and closedness of its graph implies that $\mathcal{F}$ is \emph{upper semi--continuous}, specifically,
\[
\left\{ [ e, \vc{w}, \mathbb{H} ]  \ \Big| \
\mathcal{F} (e, \vc{w}, \mathbb{H} ) \cap D \ne \emptyset \right\}
\ \mbox{is closed whenever}\ D \ \mbox{is closed in}\ R^N \times R^N,
\]
see Wagner \cite{Wag}.

As pre-images of closed sets are measurable, we get (strong) measurability of $\mathcal{F}$, specifically,
\[
\left\{ \omega \in \Omega \ \Big| \ \mathcal{F}(e, \vc{w}, \mathbb{H} ) \cap D \ne \emptyset \right\}
\]
is measurable for any closed set $D$ in $R^N \times R^{N \times N}_{0, {\rm sym}}$.
Strong measurability implies weak measurability of $\mathcal{F}$, namely,
\[
\left\{ \omega \in \Omega \ \Big| \ \mathcal{F}(e, \vc{w}, \mathbb{H} ) \cap G \ne \emptyset \right\}
\]
is measurable for any open set $G$ in $R^N \times R^N$.
Applying Theorem \ref{TRN} we may infer that the mapping $\mathcal{F}$ admits an
$[\mathfrak{F}; \mathfrak{B}[R^N \times R^N]]$-measurable selection. In particular, there exists an
$[\mathfrak{F}; \mathfrak{B}[ R^N \times R^N]]$-measurable mapping
\[
F: \Omega \to R^N \times R^N
\]
such that it holds $\prst$-a.s.:
\begin{equation} \label{c8}
\mbox{if}\ [\vc{w}(\omega), \mathbb{H}(\omega)] \in \mathcal{S}[e(\omega)], \ \mbox{then}\
F(\omega) = [\vc{a}, \vc{b}],
\ \mbox{
where}\ [\vc{a}, \vc{b} ]\ \mbox{{satisfy (\ref{c1})--(\ref{c3ab}).}}
\end{equation}

\subsection{Analytic setting}

Following \cite{DelSze3} we introduce a mapping
\[
\xi = [\xi_0, \xi_1, \dots, \xi_n ] \mapsto \mathbb{A}(\xi) \in R^{(N + 1) \times (N+1)}_{0 , {\rm sym}},
\]
\begin{equation} \label{d1}
\mathbb{A}_{\vc{a}, \vc{b}} (\xi) = \frac{1}{2} \left( (\mathbb{R} \cdot \xi) \otimes (\mathbb{Q} (\xi) \cdot \xi ) + (\mathbb{Q} (\xi) \cdot \xi )
\otimes (\mathbb{R} \cdot \xi)
\right)
\end{equation}
where
\[
\mathbb{Q} = \xi \otimes e_{0} - e_0 \otimes \xi , \ \mathbb{R} = \left( [0, \vc{a}] \otimes [0, \vc{b}] \right) -  \left(
[0, \vc{b}] \otimes [0, \vc{a}] \right),
\]
and
\[
e_0 = [1,0,\dots,0],\
\vc{a},  \vc{b} \in R^N,\
\frac{1}{2} |\vc{a}|^2 = \frac{1}{2} |\vc{b}|^2 = e > 0,\ \vc{a} \ne \pm \vc{b}.
\]
$\mathbb{A}_{\bf{a}, \bf{b}}$ can be seen as a Fourier symbol of a pseudo--differential operator, where $\xi = (\xi_0, \xi_1, \dots, \xi_N)$
corresponds to $\partial = [\partial_t, \partial_{x_1}, \dots, \partial_{x_N}]$.

The following was shown in \cite{DelSze3}:
\begin{itemize}
\item if $\phi \in \DC(R \times R^N)$, then
\begin{equation} \label{d2}
\mathbb{A}_{\vc{a}, \vc{b}} (\partial) [\phi] = \left[ \begin{array}{cc} 0 & \vc{w} \\ \vc{w} & \mathbb{H} \end{array}  \right]
\ \mbox{satisfies}\ \partial_t \vc{w} + \Div \mathbb{H} = 0, \ \Div \vc{w} = 0;
\end{equation}

\item
for
\begin{equation} \label{d3}
\eta_{\vc{a}, \vc{b}} = - \frac{1}{\left( |\vc{a}| |\vc{b}| + \vc{a} \cdot \vc{b} \right)^{2/3} } \Big[ [0,\vc{a}] + [0,\vc{b}] -
\left( |\vc{a}| |\vc{b}| + \vc{a} \cdot \vc{b} \right) e_0 \Big],\ \psi \in C^\infty(R),
\end{equation}
we have
\begin{equation} \label{d4}
\mathbb{A}_{\vc{a}, \vc{b}}(\partial) [\psi ( [t, \vc{x}] \cdot \eta_{\vc{a}, \vc{b}} )]  =
\psi''' ( [t, \vc{x}] \cdot \eta_{\vc{a}, \vc{b}} ) \left[ \begin{array}{cc} 0 & \vc{a} - \vc{b} \\ \vc{a} - \vc{b} & \vc{a} \otimes
\vc{a} - \vc{b} \otimes \vc{b} \end{array}  \right].
\end{equation}

\end{itemize}

\subsection{Stochastic version of oscillatory lemma}

Let $Q = \big\{ (t,x) \ \big| \ t \in (0,1),\ x \in (0,1)^N \big\}$. Let $( \Omega, \mathfrak{F}, \prst )$ be a probability space
with  a complete $\sigma$-algebra of measurable sets $\mathfrak{F}$. The following is the main result of the present section.

\begin{Lemma} \label{OL1}

Let $\omega \mapsto [e, \vc{w}, \mathbb{H}]$ be a $[\mathfrak{F}; \mathfrak{B}[ (0, \infty) \times R^N \times R^{N \times N}_{0, {\rm sym}} ] ]$ measurable mapping such that
\begin{equation} \label{aHYP1}
[\vc{w}, \mathbb{H}] \in \mathcal{S}[e] \ \prst\mbox{-a.s.}
\end{equation}

Then there exists a sequence $\vc{w}_n \in \DC(Q; R^N)$ $\pas$ and $\mathbb{V}_n \in \DC (Q; R^{N \times N}_{0, {\rm sym}} )$ $\pas$, $n\in\mathbb{N}$, enjoying the following
properties:
\begin{enumerate}[i)]
\item
$t \mapsto [\vc{w}_n, \mathbb{V}_n]$ is a stochastic process, meaning
\begin{equation} \label{d5}
\begin{split}
[ \vc{w}_n (t, \cdot); \mathbb{V}_n (t, \cdot) ] &\in C([0,1]^N; R^N \times R^{N \times N}_{0, {\rm sym}})\ \prst\mbox{-a.s.} \\ &\mbox{is}\ \Big[ \mathfrak{F}; \mathfrak{B}[C([0,1]^N; R^N \times R^{N \times N}_{0, {\rm sym}} ) \Big]\mbox{-measurable}
\ \mbox{for any}\ t \in [0,1];
\end{split}
\end{equation}
\item  in $Q$ we have $\prst$-a.s.
\begin{equation} \label{d6}
\partial_t \vc{w}_n + \Div \mathbb{V}_n = 0, \ \Div \vc{w}_n = 0;
\end{equation}

\item as $n \to \infty$ we have $\prst$-a.s.
\begin{equation} \label{d7}
\vc{w}_n \to 0 \ \mbox{in}\ C_{\rm weak}([0,1]; L^2([0,1]^N; R^N)) ;
\end{equation}

\item in $Q$ we have $\prst$-a.s.
\begin{equation} \label{d8}
[\vc{w} + \vc{w}_n, \ \mathbb{H} + \mathbb{V}_n ] \in \mathcal{S}[e];
\end{equation}

\item the following holds $\prst$-a.s.

\begin{equation} \label{d9}
\liminf_{n \to \infty} \frac{1}{|Q|} \int_Q |\vc{w}_n |^2 \ \dxdt \geq \frac{c(N)}{{e}}
\left( e - \frac{1}{2} |\vc{w}|^2 \right)^2 .
\end{equation}

\end{enumerate}

If, in addition to (\ref{aHYP1}), $e \leq \Ov{e}_M$ $\prst$-a.s.,
and
\begin{equation} \label{aHYP2}
[\vc{w}, \mathbb{H}] \in \mathcal{S}[e - \delta] \ \mbox{for some deterministic constant}\ \delta > 0,
\end{equation}
then each $\vc{w}_n$, $\mathbb{V}_n$ has compact range in
$C(\Ov{Q}; R^N)$, $C(\Ov{Q}; R^{N \times N}_{0, {\rm sym}})$, and
\begin{equation} \label{d8a}
[\vc{w} + \vc{w}_n, \ \mathbb{H} + \mathbb{V}_n ] \in \mathcal{S}[e - \delta_n] \ \prst\mbox{-a.s.};
\end{equation}
for some deterministic constants $\delta_n > 0$.

\end{Lemma}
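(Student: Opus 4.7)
The plan is to lift the deterministic plane--wave construction of De Lellis and Sz\'ekelyhidi \cite{DelSze3} to the random setting by means of the measurable selection $F=[\vc a,\vc b]$ already established in (\ref{c8}). I apply that selection to $\mathcal F$ so that $\omega\mapsto[\vc a(\omega),\vc b(\omega)]$ is $\mathfrak F$-measurable and pointwise satisfies (\ref{c1})--(\ref{c3ab}). Since the formulas for $\vc s=\vc a-\vc b$, $\mathbb M=\vc a\otimes\vc a-\vc b\otimes\vc b$, the direction $\eta_{\vc a,\vc b}$ from (\ref{d3}), and the operator symbol $\mathbb A_{\vc a,\vc b}$ from (\ref{d1}) are rational functions of $[\vc a,\vc b]$, each one is automatically an $\mathfrak F$-measurable random element in the relevant finite-dimensional space. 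The amplitude constant $L=L(\omega)$ from (\ref{c3}) can likewise be chosen measurably, either by reading it off the explicit formulas in \cite[Lemma~6]{DelSze3} or by invoking the Kuratowski--Ryll-Nardzewski theorem again on the (closed) interval of admissible $L$'s.

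Having fixed such a measurable choice, let $\chi\in\DC(Q)$ with $0\le\chi\le 1$ and set
\[
\phi_n(\omega;t,x)=\chi(t,x)\,\frac{L(\omega)}{n^3}\,\sin\bigl(n\,\eta(\omega)\cdot(t,x)\bigr),\qquad [\vc w_n,\mathbb V_n]:=\mathbb A_{\vc a(\omega),\vc b(\omega)}(\partial)[\phi_n].
\]
Expanding the derivatives via the Leibniz rule and invoking (\ref{d4}) one obtains
\[
[\vc w_n,\mathbb V_n](\omega;t,x)=L(\omega)\,\chi(t,x)\cos\bigl(n\eta\cdot(t,x)\bigr)\,[\vc s,\mathbb M]+\frac{1}{n}R_n(\omega;t,x),
\]
where $R_n$ is smooth, compactly supported in $Q$, and uniformly bounded on every event $\{L+|\eta|\le\Lambda\}$; it collects those terms in which at least one derivative falls on $\chi$.

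Verification of the five properties: (\ref{d5}) is immediate because $\omega\mapsto\phi_n(\omega;\cdot)\mapsto\mathbb A_{\vc a,\vc b}(\partial)\phi_n$ is a continuous, hence Borel, map of the measurable parameters into a space of smooth functions; (\ref{d6}) follows from the general identity (\ref{d2}) applied to the scalar $\phi_n$; (\ref{d7}) is Riemann--Lebesgue for the principal oscillating part together with $\|n^{-1}R_n\|_{C(\bar Q)}\to 0$, the $t$-continuity required for $C_{\rm weak}([0,1];L^2)$ being supplied by the smoothness of $\chi$ and $\eta$. For (\ref{d9}), I use $\cos^2(n\eta\cdot)\rightharpoonup 1/2$ weakly-$\ast$ in $L^\infty(Q)$ to obtain
\[
\liminf_{n\to\infty}\frac{1}{|Q|}\int_Q|\vc w_n|^2\,\dxdt\ge\frac{L^2|\vc s|^2}{2|Q|}\int_Q\chi^2\,\dxdt,
\]
and diagonalize over a sequence $\chi_k\nearrow 1_Q$ in $L^2$; combining with (\ref{c3}) recovers the required constant $c(N)e^{-1}(e-\tfrac12|\vc w|^2)^2$. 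The delicate step is (\ref{d8}): on $\{\chi=1\}$ the principal term lies on the segment $[\vc w+\lambda\vc s,\mathbb H+\lambda\mathbb M]$ with $\lambda\in[-L,L]$, which by (\ref{c2}) stays in $\mathcal S[e]$ at distance at least $\tfrac12\,\mathrm{dist}([\vc w,\mathbb H];\partial\mathcal S[e])$ from the boundary, while the transition region contributes $O(1/n)$. Hence for $n$ above a threshold $n_0(\omega)$ defined as the least integer for which $n^{-1}\|R_n(\omega)\|_{C(\bar Q)}<\tfrac14\,\mathrm{dist}([\vc w(\omega),\mathbb H(\omega)];\partial\mathcal S[e(\omega)])$, the inclusion (\ref{d8}) holds; $n_0$ is $\mathfrak F$-measurable as an infimum over countably many measurable conditions, so redefining $[\vc w_n,\mathbb V_n]:=0$ for $n<n_0$ is harmless for the limits (\ref{d7}) and (\ref{d9}).

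Finally, under the additional hypotheses $e\le\bar e_M$ and (\ref{aHYP2}), the distance $\mathrm{dist}([\vc w,\mathbb H];\partial\mathcal S[e])$ is bounded below by a deterministic positive quantity depending only on $\bar e_M$ and $\delta$; hence $L$, $|\vc s|$, $|\eta|$ and $\|R_n\|_{C^1(\bar Q)}$ all admit $\omega$-independent deterministic bounds, from which the compact range of $[\vc w_n,\mathbb V_n]$ in $C(\bar Q;R^N)$ and $C(\bar Q;R^{N\times N}_{0,\rm sym})$ follows by Arzel\`a--Ascoli. The same uniform control provides deterministic $\delta_n>0$ with (\ref{d8a}). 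The main obstacle throughout is reconciling the random choice of the plane--wave direction $\eta$ and of the amplitude $L$ with the need for $\mathfrak F$-measurable, spatially smooth output; once the measurable selection from $\mathcal F$ is in hand, everything else is a faithful transcription of the deterministic argument, with the threshold $n_0(\omega)$ being the only genuinely random ingredient and precisely the device that forces us to introduce the stopping times $\tau_M$ later on.
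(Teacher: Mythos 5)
Your proof follows the paper's argument essentially step by step: measurable selection of $[\vc a,\vc b]$ (and $L$) from $\mathcal F$ via Kuratowski--Ryll-Nardzewski, the plane-wave ansatz via $\mathbb A_{\vc a,\vc b}(\partial)$ applied to a cutoff times an $n^{-3}$-scaled oscillation, a random threshold $n_0(\omega)$ to absorb the remainder, and deterministic bounds on $L,|\vc s|,|\eta|,R_n$ under the extra hypotheses to get compact range. The only deviations are cosmetic (sine versus cosine in the profile, and a slightly more explicit treatment of the measurable choice of $L$) together with a diagonal argument over cutoffs $\chi_k\nearrow 1_Q$ in place of the paper's appeal to \cite[Lemma~7]{DelSze3} to recover the constant in \eqref{d9}.
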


\begin{Remark} \label{aR5}
Hypothesis (\ref{aHYP2}) is equivalent to saying that
\[
{\rm ess}\inf_{\Omega} \left\{ e - \frac{N}{2} \lambda_{\rm max} \left[ \vc{w} \otimes \vc{w} - \mathbb{H} \right] \right\} > 0.
\]
Note that if this is the case, we have $e \geq \delta > 0$; whence $e$ is a random variable with a compact range in $(0, \infty)$.

\end{Remark}

\begin{proof}

The proof is given through several steps.

\medskip

{\bf Step 1:}

Given $[\vc{w}, \mathbb{H}]$ and $e$ we identify the measurable selection of vectors $[\vc{a}, \vc{b}]$ satisfying (\ref{c8}).

\medskip

{\bf Step 2:}

For each $[\vc{a}, \vc{b}]$ we construct the operator $\mathbb{A}_{\vc{a}, \vc{b}}$ and the vector $\eta_{\vc{a}, \vc{b}}$ enjoying
(\ref{d2}--\ref{d4}).

\medskip

{\bf Step 3:}

We consider a deterministic function $\varphi \in \DC(Q)$ such that
\[
0 \leq \varphi \leq 1, \ \varphi(t,x) = 1 \ \mbox{whenever} \ - \frac{1}{2} \leq t \leq \frac{1}{2}, \ |\vc{x}| \leq \frac{1}{2}.
\]

\medskip

{\bf Step 4:}

We identify the functions $\vc{w}_n$, $\mathbb {V}_n$ from the relation
\[
\mathbb{A}_{\vc{a}, \vc{b}}(\partial ) \left[
\varphi \frac{L}{n^3} \cos \left( n [t,x] \cdot \eta_{\vc{a}, \vc{b}} \right) \right] =
\left[ \begin{array}{cc} 0 & \vc{w}_n \\ \vc{w}_n & \mathbb{V}_n \end{array}  \right].
\]

In accordance with our construction of the points $[\vc{a}, \vc{b}]$, the operator $\mathbb{A}_{\vc{a}, \vc{b}}$, and the vector
$\eta_{\vc{a}, \vc{b}}$, it is easy to check the $\vc{w}_n$, $\mathbb{V}_n$ enjoy the required measurability properties
(\ref{d5}). Moreover, by virtue of (\ref{d2}), equations (\ref{d6}) are satisfied.

\medskip

{\bf Step 5:}

As $\mathbb{A}$ is a homogeneous differential operator of third order, we get, in agreement with (\ref{d4}),
\begin{equation} \label{pompom}
\mathbb{A}_{\vc{a}, \vc{b}}(\partial ) \left[
\varphi \frac{L}{n^3} \cos \left( n [t,x] \cdot \eta_{\vc{a}, \vc{b}} \right) \right]
= \varphi \sin \left( n [t,x] \cdot \eta_{\vc{a}, \vc{b}} \right)
L \left[ \begin{array}{cc} 0 &  (\vc{a} - \vc{b})  \\  (\vc{a}  -\vc{b})  & \vc{a} \otimes \vc{a} - \vc{b} \otimes \vc{b} \end{array}  \right]
+ \frac{1}{n} R_n
\end{equation}
with $|R_n|$ uniformly bounded for $n \to \infty$. As (\ref{c2}), (\ref{c3}) holds, we deduce the remaining properties (\ref{d7}--\ref{d9})
provided $n$ is chosen large enough. {Note that we have}
 \begin{align*}|\varphi \sin \left( n [t,x] \cdot \eta_{\vc{a}, \vc{b}} \right)|\leq 1
\end{align*}
{and}
\begin{align*}
\int_Q |\vc{w}_n |^2 \ \dxdt &\geq \frac{c}{{e}}
\left( e - \frac{1}{2} |\vc{w}|^2 \right)^2 \int_Q \varphi^2\sin^2(n[t,x]\cdot\eta_{\vc{a},\vc{b}})\dx\dt-\frac{c|R_n|^2}{n^2}\\
&=\frac{c}{{e}}
\left( e - \frac{1}{2} |\vc{w}|^2 \right)^2 \frac{|Q|}{2}-\frac{c|R_n|^2}{n^2}
\end{align*}
using \cite[Lemma 7]{DelSze3} in the last step.
Strictly speaking $|R_n|$ is a random variable so we need $n \geq n_0(\omega)$, where
the latter is $\mathfrak{F}$-measurable. Setting $[\vc{w}_n, \mathbb{V}_n] = [0,0]$ whenever $n \leq n_0$ yields the desired inclusion
(\ref{d8}).

{\bf Step 6:}

Finally, if $e \leq \Ov{e}_M$ for some deterministic constants, then $\vc{w}$, $\mathbb{H}$ have compact range in $R^N$, $R^{N \times N}_{0, {\rm sym}}$, respectively. In addition, hypothesis (\ref{aHYP2}) implies that
\[
[\vc{w}, \mathbb{H}] \in \mathcal{S}[e - \ep] \ \mbox{for any}\ 0 \leq \ep < \delta.
\]
Thus the above construction can be therefore repeated with $e$ replaced by $e - \ep$, $\ep > 0$.
{Moreover, in view of (\ref{c3ab}), the remainder $R_n$ specified in {\bf Step 5} above is now bounded uniformly by a deterministic constant
depending only on $\ep$.} Since
\[
\Ov{ \mathcal{S}[e - \delta] } \subset \mathcal{S}[e - \varepsilon] \subset \Ov{\mathcal{S}[e - \varepsilon] } \subset \mathcal{S}[e],
\]
compactness of the range of $\vc{w}_n$, $\mathbb{V}_n$
follows from their construction {and (\ref{c3ab}). Notably relations \eqref{c1} and (\ref{c3ab}) yield deterministic (in terms of $\ep$) upper and lower
bounds on the norm of the vector $\eta_{\vc{a}, \vc{b}}$ used in the construction of $\vc{w}_n$, $\mathbb{V}_n$.}

As $\ep > 0$ can be taken arbitrarily small, the desired conclusion follows.

\end{proof}

\subsubsection{Extension by scaling}

Let
\[
Q = (T_1, T_2) \times (a_1, b_1) \times \dots \times (a_N, b_N).
\]
Following \cite[Section 4.2]{DoFeMa}, we may use scaling in $t$ and $x$ and additivity of the integral to show the following extension of Lemma \ref{OL1}.

\begin{Lemma} \label{OL2}

Let $\omega \mapsto [e,r, \vc{w}, \mathbb{H}]$ be a $[\mathfrak{F}; \mathfrak{B}[ (0, \infty)^2, R^N, R^{N \times N}_{0, {\rm sym}} ] ]$-measurable mapping such that
\[
\left[ \frac{\vc{w}}{\sqrt{r}}, \mathbb{H} \right] \in \mathcal{S}[e] \ \prst\mbox{-a.s.}
\]

Then there exists a sequence $\vc{w}_n \in \DC(Q; R^N)$ $\pas$ and $\mathbb{V}_n \in \DC (Q; R^{N \times N}_{0, {\rm sym}} )$ $\pas$, $n\in\mathbb{N}$, enjoying the following
properties:

\begin{enumerate}[i)]
\item
$t \mapsto [\vc{w}_n, \mathbb{V}_n]$ is a stochastic process, meaning
\begin{equation} \label{d10}
\begin{split}
[ \vc{w}_n (t, \cdot); \mathbb{V}_n (t, \cdot) ] &\in C(\Pi_{i=1}^N [a_i, b_i]; R^N \times R^{N \times N}_{0, {\rm sym}})\ \prst\mbox{-a.s.} \\ &\mbox{is}\ \Big[ \mathfrak{F}; \mathfrak{B}[C(\Pi_{i=1}^N [a_i, b_i]; R^N \times R^{N \times N}_{0, {\rm sym}} ) \Big]\mbox{-measurable}
\ \mbox{for any}\ t \in [T_1,T_2];
\end{split}
\end{equation}
\item in $Q$ we have $\prst$-a.s.
\begin{equation} \label{d11}
\partial_t \vc{w}_n + \Div \mathbb{V}_n = 0, \ \Div \vc{w}_n = 0;
\end{equation}

\item as $n\rightarrow\infty$ we have $\prst$-a.s.
\begin{equation} \label{d12}
\vc{w}_n \to 0 \ \mbox{in}\ C_{\rm weak} ([T_1,T_2]; L^2(R^N; R^N));
\end{equation}

\item in $Q$ we have $\prst$-a.s.
\begin{equation} \label{d13}
\left[ \frac{\vc{w} + \vc{w}_n}{\sqrt{r}}, \ \mathbb{H} + \mathbb{V}_n \right] \in \mathcal{S}[e] ;
\end{equation}

\item the following holds $\prst$-a.s.

\begin{equation} \label{d14}
\liminf_{n \to \infty} \frac{1}{|Q|} \int_Q \frac{|\vc{w}_n |^2}{r} \ \dxdt \geq \frac{c(N)}{{e}}
\left( e - \frac{1}{2} \frac{|\vc{w}|^2}{r} \right)^2.
\end{equation}

\end{enumerate}

If, in addition,
\begin{equation} \label{cRc}
0 < \underline{r}_M \leq r \leq \Ov{r}_M, \ 0 < \underline{e}_M \leq e \leq \Ov{e}_M \ \prst\mbox{-a.s.}
\end{equation}
for some deterministic constants $\underline{r}_M$, $\Ov{r}_M$, $\underline{e}_M$, $\Ov{e}_M$,
and
\[
\left[ \frac{\vc{w}}{\sqrt{r}}, \mathbb{H} \right] \in \mathcal{S}[e - \delta] \ \prst\mbox{-a.s. for some deterministic}\ \delta > 0,
\]
then each $\vc{w}_n$, $\mathbb{V}_n$ has compact range in
$C(\Ov{Q}; R^N)$, $C(\Ov{Q}; R^{N \times N}_{0, {\rm sym}})$, respectively, and
\begin{equation} \label{d13a}
\left[ \frac{\vc{w} + \vc{w}_n}{\sqrt{r}}, \ \mathbb{H} + \mathbb{V}_n \right] \in \mathcal{S}[e - \delta_n] \ \prst\mbox{-a.s.
for some deterministic}\ \delta_n > 0.
\end{equation}

\end{Lemma}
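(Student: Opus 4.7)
The plan is to reduce Lemma \ref{OL2} to Lemma \ref{OL1} by two successive rescalings: first eliminate the density $r$, then tile the general box $Q$ with isotropically scaled copies of the unit cube on which Lemma \ref{OL1} applies. Throughout the argument the data $[e, r, \vc{w}, \mathbb{H}]$ are constant in $(t,x)$, depending only on $\omega$, so every scaling operation commutes with the convex constraint $\mathcal{S}[e]$.

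\textbf{Density reduction.} Since $r > 0$ is spatially and temporally constant, I substitute $\tilde{\vc{w}} := \vc{w}/\sqrt{r}$, so that the hypothesis $[\vc{w}/\sqrt{r}, \mathbb{H}] \in \mathcal{S}[e]$ becomes $[\tilde{\vc{w}}, \mathbb{H}] \in \mathcal{S}[e]$. If $\tilde{\vc{w}}_n, \mathbb{V}_n$ satisfy the conclusions of Lemma \ref{OL1} relative to $[\tilde{\vc{w}}, \mathbb{H}, e]$ on the box in question, then $\vc{w}_n := \sqrt{r}\,\tilde{\vc{w}}_n$ together with $\mathbb{V}_n$ meet all requirements of Lemma \ref{OL2}: the PDE constraints \eqref{d11} hold because $\sqrt{r}$ is a constant multiplier, \eqref{d13} is \eqref{d8} after division by $\sqrt{r}$, and \eqref{d14} is \eqref{d9} after dividing the integrand by $r$. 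Measurability is preserved since multiplication by the $\mathfrak{F}$-measurable scalar $\sqrt{r}$ is a measurable operation.

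\textbf{Box reduction.} It remains to pass from the unit cube $Q_0 = (0,1)^{N+1}$ to an arbitrary rectangular box $Q$. For a parameter $\ell > 0$ I consider a grid of disjoint open cubes $Q_{k,\ell}$ of side $\ell$ indexed by $k \in \mathbb{Z}^{N+1}$, and let $\mathcal{I}_\ell$ be the finite set of indices for which $Q_{k,\ell} \subset Q$. Applying Lemma \ref{OL1} to the space-time-constant data $[\tilde{\vc{w}}, \mathbb{H}, e]$ yields a measurable sequence $\vc{w}_n^{(0)} \in \DC(Q_0; R^N)$, $\mathbb{V}_n^{(0)} \in \DC(Q_0; R^{N \times N}_{0,{\rm sym}})$ on $Q_0$. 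I copy and isotropically rescale this construction into each cube $Q_{k,\ell}$ by composing with the affine map $(t,x) \mapsto (t,x)/\ell - k$, obtaining $\vc{w}_n^{(k,\ell)}$, $\mathbb{V}_n^{(k,\ell)}$, and then set $\tilde{\vc{w}}_n := \sum_{k \in \mathcal{I}_\ell} \vc{w}_n^{(k,\ell)}$ and $\mathbb{V}_n := \sum_{k \in \mathcal{I}_\ell} \mathbb{V}_n^{(k,\ell)}$. Since isotropic scaling multiplies both $\partial_t$ and $\Div$ by the common factor $1/\ell$, the PDE constraints \eqref{d11} are preserved, and the summands have pairwise disjoint supports inside their respective cubes because of the cutoff embedded in Lemma \ref{OL1}. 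The set inclusion \eqref{d13} holds pointwise: on each $Q_{k,\ell}$ only one summand is active and \eqref{d8} applies directly, while on the complement of the covered region both perturbations vanish and the inclusion reduces to the hypothesis. The weak convergence \eqref{d12} is inherited from \eqref{d7} by a finite-sum argument, and measurability is preserved since all operations (deterministic affine rescaling, multiplication by $\sqrt{r}$, finite summation) are measurable transformations of the random data from Lemma \ref{OL1}.

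\textbf{Integral bound and compact range.} The lower bound \eqref{d14} follows by additivity of the integral and the change of variables $\int_{Q_{k,\ell}} |\vc{w}_n^{(k,\ell)}|^2 = \ell^{N+1} \int_{Q_0} |\vc{w}_n^{(0)}|^2$, giving
\[
\liminf_{n \to \infty} \frac{1}{|Q|} \int_Q |\tilde{\vc{w}}_n|^2 \, \dxdt \geq \frac{|\mathcal{I}_\ell|\,\ell^{N+1}}{|Q|}\cdot \frac{c(N)}{e}\Bigl(e - \tfrac{1}{2}|\tilde{\vc{w}}|^2\Bigr)^2.
\]
Since $|\mathcal{I}_\ell|\,\ell^{N+1}/|Q| \to 1$ as $\ell \to 0$, the $\ell$-dependent fraction can be made arbitrarily close to $1$ and absorbed into the universal constant $c(N)$; this exchange of the limits $n \to \infty$ and $\ell \to 0$ is the only technically delicate point, and it is settled either by choosing $\ell$ sufficiently small once (renaming $c(N)$) or by a standard diagonal extraction $\ell = \ell_n \to 0$ slowly enough. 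Under the additional uniform bounds \eqref{cRc} and the strict inclusion hypothesis, the second part of Lemma \ref{OL1} delivers $\vc{w}_n^{(0)}, \mathbb{V}_n^{(0)}$ with compact range and a deterministic gap $\delta_n > 0$; since finite sums of deterministically rescaled copies preserve compactness of range, the strict inclusion \eqref{d13a} follows for a (possibly smaller) deterministic $\delta_n$.
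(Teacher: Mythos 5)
There is a genuine gap in the \textbf{density reduction} step. You set $\vc{w}_n := \sqrt{r}\,\tilde{\vc{w}}_n$ while keeping $\mathbb{V}_n$ unchanged and claim the linear constraint $\partial_t \vc{w}_n + \Div \mathbb{V}_n = 0$ survives ``because $\sqrt{r}$ is a constant multiplier.'' It does not: from Lemma~\ref{OL1} you have $\partial_t \tilde{\vc{w}}_n = -\Div \mathbb{V}_n$, hence
\[
\partial_t \vc{w}_n + \Div \mathbb{V}_n = \sqrt{r}\,\partial_t \tilde{\vc{w}}_n + \Div \mathbb{V}_n = \bigl(1 - \sqrt{r}\bigr)\Div \mathbb{V}_n,
\]
which is nonzero unless $r=1$. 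The amplitude scaling of $\vc{w}_n$ must be compensated in the differential constraint, and the only way to do that without also rescaling $\mathbb{V}_n$ (which you cannot do, since the inclusion in $\mathcal{S}[e]$ pins down $\mathbb{H}+\mathbb{V}_n$) is to rescale time: set $\vc{w}_n(t,x) = \sqrt{r}\,\tilde{\vc{w}}_n(t/\sqrt{r},x)$, $\mathbb{V}_n(t,x) = \tilde{\mathbb{V}}_n(t/\sqrt{r},x)$. Then $\partial_t \vc{w}_n = \partial_s \tilde{\vc{w}}_n$ and the equation is preserved, the pointwise inclusion $[(\vc{w}+\vc{w}_n)/\sqrt{r},\mathbb{H}+\mathbb{V}_n]\in\mathcal{S}[e]$ holds, and the averaged energy bound is invariant.

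This forced anisotropy then propagates into the tiling: your cubes $Q_{k,\ell}$ of isotropic side $\ell$ must be replaced by boxes of shape $\sqrt{r}\,\ell$ in time by $\ell$ in each spatial direction (or any rescaling with the ratio $A\alpha = B\beta$ where $A=\sqrt{r}$, $B=1$), exactly the ``scaling in $t$ and $x$'' the paper invokes from \cite[Section 4.2]{DoFeMa}. With that modification the rest of your argument — disjoint supports, additivity of the integral, the $|\mathcal{I}_\ell|\,\ell^{N+1}/|Q|\to1$ absorption into the constant, measurability by composition of the Ryll--Nardzewski selection with deterministic/measurable affine maps, and the compact-range assertion under \eqref{cRc} — goes through. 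Note only that once the tile shape depends on $r(\omega)$ the grid itself becomes random; this is harmless because $r$ is constant across $Q$ and measurable in $\omega$, but it deserves a sentence, and the uniform bounds in \eqref{cRc} are precisely what keeps the tile geometry (and hence the frequency/amplitude data passed to Lemma~\ref{OL1}) inside a deterministic compact set in the second half of the statement.
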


\begin{Remark} \label{AdR4}

Condition (\ref{cRc}) can be equivalently formulated saying that the random variable $[r, e]$ has compact range in $(0, \infty)^2$.

\end{Remark}

\subsubsection{Extension to piecewise constant coefficients}

Consider now a complete right-continuous filtration $(\mathfrak{F}_t)_{t \geq 0}$ of measurable sets in
$\Omega$ and fix
$Q = (0,T) \times (0,1)^N$. We write $[0,1]^N = \cup_{i \in I} \overline K_i$, where $K_i$ are disjoint open cubes of the edge length $\frac{1}{m}$ for some $m\in\mathbb N$.
The random variables $e$, $r$, $\vc{w}$, and $\mathbb{H}$ will be now $\prst$-a.s. functions of the time $t$ and the spatial variable $x$ that are piecewise constant. More specifically, they shall $\prst$-a.s. belong to the class of functions satisfying
\begin{equation} \label{p1}
F(t,x) = F_{j,i} \ \mbox{whenever}\ t \in \left[ \frac{jT}{m}; \frac{(j+1)T}{m} \right),\ x \in K_i, \ 0 \leq j \leq m-1,\ i \in I.
\end{equation}
These functions are piecewise constant on the rectangular grid given by
\[
\left[ \frac{jT}{m}, \frac{(j+1)T}{m} \right) \times K_i,\
0 \leq j \leq m -1, \ i \in I.
\]
In addition, we suppose that $[e,r,\vc{w},\mathbb H]$ is $(\mathfrak{F}_t)$-adapted, meaning that
\[
[e,r,\vc{w},\mathbb H](t,\cdot) \ \mbox{is}\ \mathfrak{F}_{\frac{jT}{m}} \ \mbox{-measurable whenever}\
t \in \left[ \frac{jT}{m}; \frac{(j+1)T}{m} \right).
\]

Keeping in mind that the oscillatory increments $[\vc{w}_n, \mathbb{V}_n]$ constructed in Lemma \ref{OL2}
are compactly supported in each cube and hence globally smooth,
we get the following result when applying  Lemma \ref{OL2} with $\mathfrak F$
replaced by $\mathfrak{F}_{\frac{jT}{m}}$.
{Note that $\vc{w}_n,\mathbb V_n$ are even $\mathfrak{F}_{\frac{jT}{m}}$ adapted}
\begin{Lemma} \label{OL3}

Let $\bas$ be a probability space with a complete right continuous filtration $(\mathfrak{F}_t)_{t \geq 0}$.
Let $[e,r, \vc{w}, \mathbb{H}]$ be an $(\mathfrak{F}_t)$-adapted stochastic process which is $\prst$-a.s. piecewise constant and belongs to the class (\ref{p1}). Suppose further that $r > 0$, $e > 0$
$\prst$-a.s.
and
\begin{equation} \label{TQ1}
\left[ \frac{\vc{w}}{\sqrt{r}}, \mathbb{H} \right] \in \mathcal{S}[e] \ \mbox{for all}\ (t,x) \in \Ov{Q} \ \prst\mbox{-a.s.}
\end{equation}

Then there exists a sequence $\vc{w}_n \in \DC(Q; R^N)$ $\pas$ and $\mathbb{V}_n \in \DC (Q; R^{N \times N}_{0, {\rm sym}} )$ $\pas$, $n\in\mathbb{N}$, enjoying the following
properties:

\begin{enumerate}[i)]
\item
the process $[\vc{w}_n,\tn{V}_n]$ is $(\mathfrak{F}_t)$-adapted such that
\[
[\vc{w}_n,\tn{V}_n] \in C(\Ov{Q}; R^N \times R^{N \times N}_{0, {\rm sym}})\ \prst\mbox{-a.s.} \ \mbox{with compact range};
\]
\item in $Q$ we have $\prst$-a.s.
\[
\partial_t \vc{w}_n + \Div \mathbb{V}_n = 0, \ \Div \vc{w}_n = 0;
\]

\item as $n\rightarrow\infty$ we have $\prst$-a.s.
\[
\vc{w}_n \to 0 \ \mbox{in}\ C_{\rm weak}([0,T]; L^2(\TN; R^N)) ;
\]

\item in $Q$ we have $\prst$-a.s.
\begin{equation} \label{TQ2}
\left[ \frac{\vc{w} + \vc{w}_n}{\sqrt{r}}, \ \mathbb{H} + \mathbb{V}_n \right] \in \mathcal{S}[e];
\end{equation}

\item the following holds $\prst$-a.s.

\begin{equation} \label{TQ3}
\liminf_{n \to \infty}  \int_Q \frac{|\vc{w}_n |^2}{r} \ \dxdt \geq \frac{c(N)}{\sup_Q e} \int_Q
\left( e - \frac{1}{2} \frac{|\vc{w}|^2}{r} \right)^2 \ \dxdt.
\end{equation}

\end{enumerate}

If, in addition,
\[
0 < \underline{r}_M \leq r \leq \Ov{r}_M, \ 0 < \underline{e}_M \leq e \leq \Ov{e}_M \ \prst\mbox{-a.s.}
\]
for some deterministic constants $\underline{r}_M$, $\Ov{r}_M$, $\underline{e}_M$, $\Ov{e}_M$,
and
\[
\left[ \frac{\vc{w}}{r}, \mathbb{H} \right] \in \mathcal{S}[e - \delta] \ \prst\mbox{-a.s. for some deterministic}\ \delta > 0,
\]
then each $\vc{w}_n$, $\mathbb{V}_n$ has compact range in
$C(\Ov{Q}; R^N)$, $C(\Ov{Q}; R^{N \times N}_{0, {\rm sym}})$, respectively, and
\[
\left[ \frac{\vc{w} + \vc{w}_n}{\sqrt{r}}, \ \mathbb{H} + \mathbb{V}_n \right] \in \mathcal{S}[e - \delta_n] \ \prst\mbox{-a.s.
for some deterministic}\ \delta_n > 0.
\]

\end{Lemma}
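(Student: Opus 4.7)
The strategy is to apply Lemma \ref{OL2} separately on each cell of the rectangular grid and then glue the local oscillatory increments by zero extension. For $0 \leq j \leq m-1$ and $i \in I$ set $Q_{j,i} = (jT/m,(j+1)T/m) \times K_i$, and denote by $[e_{j,i}, r_{j,i}, \vc{w}_{j,i}, \mathbb{H}_{j,i}]$ the (constant) value of the data on $Q_{j,i}$. By the adaptedness assumption this random vector is $\mathfrak{F}_{jT/m}$-measurable and, by (\ref{TQ1}), satisfies $[\vc{w}_{j,i}/\sqrt{r_{j,i}}, \mathbb{H}_{j,i}] \in \mathcal{S}[e_{j,i}]$ almost surely. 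I would then apply Lemma \ref{OL2} on the box $Q_{j,i}$ with the $\sigma$-algebra $\mathfrak{F}$ there replaced by $\mathfrak{F}_{jT/m}$, which yields increments $\vc{w}_n^{j,i} \in \DC(Q_{j,i}; R^N)$ and $\mathbb{V}_n^{j,i} \in \DC(Q_{j,i}; R^{N \times N}_{0,{\rm sym}})$ that are $\mathfrak{F}_{jT/m}$-measurable and satisfy conclusions i)--v) of Lemma \ref{OL2} on $Q_{j,i}$.

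Next, extend each piece by zero outside $Q_{j,i}$ and set
\begin{equation*}
\vc{w}_n = \sum_{j,i} \vc{w}_n^{j,i}, \qquad \mathbb{V}_n = \sum_{j,i} \mathbb{V}_n^{j,i}.
\end{equation*}
Compact support of each piece inside its open box guarantees that $\vc{w}_n \in \DC(Q; R^N)$ and $\mathbb{V}_n \in \DC(Q; R^{N \times N}_{0,{\rm sym}})$ almost surely, with the differential identities ii) preserved across cell interfaces because all derivatives of each piece vanish in a neighbourhood of $\partial Q_{j,i}$. The adaptedness i) is the key point: on a slab $[jT/m,(j+1)T/m)$ only pieces with that particular $j$ are nonzero, and they are $\mathfrak{F}_{jT/m}$-measurable, hence $\mathfrak{F}_t$-measurable for every $t$ in the slab. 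Weak convergence iii) is inherited slab by slab from Lemma \ref{OL2}, and weak time-continuity across the grid times is ensured by the compact support of each piece in its time-interval. Inclusion iv) holds inside every open box by the local statement, while on the (measure-zero) grid skeleton it reduces to (\ref{TQ1}) because $[\vc{w}_n,\mathbb{V}_n]=[0,0]$ there. Summing (\ref{d14}) over the cells and bounding $1/e_{j,i} \geq 1/\sup_Q e$ gives (\ref{TQ3}).

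For the conditional statement, the uniform bounds on $r,e$ and the deterministic distance $\delta$ to $\partial\mathcal{S}[e]$ are inherited by each constant piece, so the compactness part of Lemma \ref{OL2} produces pieces with compact range in $C(\overline{Q}_{j,i})$ and deterministic $\delta_n^{j,i} > 0$. Since the index set is finite, taking $\delta_n = \min_{j,i} \delta_n^{j,i}$ and unioning the (finitely many) compact ranges yields a global deterministic compact range and a single deterministic $\delta_n > 0$.

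The main obstacle, and the only place where one has to be careful, is the interaction between the Kuratowski--Ryll--Nardzewski selection used in the proof of Lemma \ref{OL1}--\ref{OL2} and the filtration $(\mathfrak{F}_t)_{t\geq 0}$. The selection theorem was formulated against a single $\sigma$-algebra $\mathfrak{F}$, so to get an $(\mathfrak{F}_t)$-adapted process one must perform the selection slab by slab with the smallest admissible $\sigma$-algebra, namely $\mathfrak{F}_{jT/m}$ on the $j$-th slab. This is exactly what makes the piecewise constant hypothesis (\ref{p1}) indispensable, and it explains why the passage from continuous data (Lemma \ref{OL4}) must proceed via piecewise constant approximations.
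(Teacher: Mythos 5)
Your proof is correct and takes essentially the same approach as the paper: the paper's own (one-sentence) argument is precisely to apply Lemma \ref{OL2} on each grid cell with $\mathfrak{F}$ replaced by $\mathfrak{F}_{jT/m}$ and glue the compactly supported increments by zero extension, which is what you do. The extra bookkeeping you supply --- superadditivity of $\liminf$ when summing (\ref{d14}) over cells, taking the minimum of finitely many deterministic $\delta_n^{j,i}$, and unioning finitely many compact ranges --- is exactly what the paper tacitly relies on.
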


\subsubsection{Extension to continuous coefficients}

Using the result on the piecewise constant coefficients we may use the approximation procedure from \cite[Section 4.3]{DoFeMa}
to extend the oscillatory lemma to the class of continuous processes $[e,r,\vc{w}, \mathbb{H}]$. The obvious idea is to replace
$[e,r,\vc{w}, \mathbb{H}]$ by piecewise constant approximations and apply Lemma \ref{OL3}. More specifically,
for $e \in C([0,T] \times \TN; (0, \infty) )$ $\mathbb P$-a.s., $(\mathfrak{F}_t)$-adapted, $e > 0$ $\prst$-a.s., we define a piecewise constant approximation
\begin{equation} \label{AP1}
\begin{split}
e_m(t,x) &= \sup_{ y \in K_i } e \left( \frac{jT}{m}, y \right)
\\ &\mbox{for}\ t \in \left[ \frac{jT}{m}; \frac{(j+1)T}{m} \right),\
x \in K_i,\ 0 \leq j \leq m-1,\  i \in I,
\end{split}
\end{equation}
and, similarly, for $F\in\{r,\vc{w}, \mathbb{H}\}$,
\begin{equation} \label{AP2}
\begin{split}
F_m(t,x) &= F \left( \frac{jT}{m}, y \right) \ \mbox{for some}\  y \in K_i,
\\ &\mbox{for}\ t \in \left[ \frac{jT}{m}; \frac{(j+1)T}{m} \right),\
x \in K_i,\ 0 \leq j \leq m-1, \ i \in I.
\end{split}
\end{equation}
It is easy to check that these approximations satisfy the hypotheses of Lemma \ref{OL3}.

Now, since $\mathcal{S}[e]$ is an open set, it is possible, similarly to \cite[Section 4.3]{DoFeMa} to replace
$F_m$ by $F$ as long as the approximation is uniform. Specifically, for any $\delta > 0$, there is $m = m(\delta)$ such that
\begin{equation} \label{AP3}
| F_m(t,x) - F(t,x) | < \delta \ \mbox{for all}\ (t,x) \in \Ov{Q}
\end{equation}
$\mathbb P$-a.s.
For (\ref{AP3}) to hold, it is necessary (and sufficient) that all random variables $F = e,r, \vc{w}, \mathbb{H}$ have compact range in the space of
continuous functions on $\Ov{Q}$. Repeating the arguments of \cite[Section 4.3]{DoFeMa} we show the final form of the oscillatory lemma.

\begin{Lemma} \label{OL4}

Let $[\Omega, \mathfrak{F}, \mathfrak{F}_t, \prst]$ be a probability space with a complete right continuous filtration $(\mathfrak{F}_t)_{t \geq 0}$.
Let $[e,r, \vc{w}, \mathbb{H}]$ be an $(\mathfrak{F}_t)$-adapted stochastic process such that
\[
[e,r, \vc{w}, \mathbb{H}] \in C \Big(\Ov{Q}; (0, \infty)^2 \times R^N \times R^{N \times n}_{0, {\rm sym}} \Big) \ \prst\mbox{-a.s.}
\]
 with compact range and such that
\begin{equation} \label{hyphyp}
\left[ \frac{\vc{w}}{\sqrt{r}}, \mathbb{H} \right] \in \mathcal{S}[e - \delta] \ \mbox{for all}\ (t,x) \in \Ov{Q} \ \prst\mbox{-a.s.}
\end{equation}
for some deterministic constant $\delta > 0$.

Then there exists a sequence $\vc{w}_n \in \DC(Q; R^N)$ $\pas$ and $\mathbb{V}_n \in \DC (Q; R^{N \times N}_{0, {\rm sym}} )$ $\pas$, $n\in\mathbb N$, enjoying the following
properties:

\begin{enumerate}[i)]
\item
the process $[\vc{w}_n,\tn{V}_n]$ is $(\mathfrak{F}_t)$-adapted such that
\[
[\vc{w}_n,\tn{V}_n] \in C(\Ov{Q}; R^N \times R^{N \times N}_{0, {\rm sym}})\ \prst\mbox{-a.s.} \ \mbox{with compact range};
\]
\item in $Q$ we have $\prst$-a.s.
\[
\partial_t \vc{w}_n + \Div \mathbb{V}_n = 0, \ \Div \vc{w}_n = 0
;
\]

\item as $n\rightarrow\infty$ we have $\prst$-a.s.
\[
\vc{w}_n \to 0 \ \mbox{in}\ C_{\rm weak}([0,T]; L^2(\TN; R^N));
\]

\item in $Q$ we have $\prst$-a.s.
\[
\left[ \frac{\vc{w} + \vc{w}_n}{\sqrt{r}}, \ \mathbb{H} + \mathbb{V}_n \right] \in \mathcal{S}[e - \delta_n]
\]
for some deterministic $\delta_n > 0$;

\item the following holds $\prst$-a.s.

\begin{equation} \label{oscil}
\liminf_{n \to \infty}  \int_Q \frac{|\vc{w}_n |^2}{r} \ \dxdt \geq \frac{c(N)}{\sup_Q e} \int_Q
\left( e - \frac{1}{2} \frac{|\vc{w}|^2}{r} \right)^2 \ \dxdt.
\end{equation}

\end{enumerate}

\end{Lemma}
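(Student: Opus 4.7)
The plan is to approximate the continuous stochastic process $[e,r,\vc{w},\mathbb{H}]$ by piecewise-constant processes $[e_m,r_m,\vc{w}_m,\mathbb{H}_m]$ via the formulas \eqref{AP1}, \eqref{AP2}, then apply Lemma \ref{OL3} to the approximations, and finally pass back to the original data through a diagonal argument that exploits the deterministic gap $\delta$ from hypothesis \eqref{hyphyp}.

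As a first step I would exploit the compact-range assumption on the paths of $[e,r,\vc{w},\mathbb{H}]$ in $C(\Ov Q)$ to upgrade the pointwise a.s.\ uniform approximation \eqref{AP3} to one in which the index $m=m(\eta)$ is \emph{deterministic}: a compact subset of $C(\Ov Q)$ has a deterministic modulus of continuity, so for every $\eta>0$ one may pick $m(\eta)$ independent of $\omega$ with $\|F_m-F\|_{C(\Ov Q)}<\eta$ $\prst$-a.s.\ for each $F\in\{e,r,\vc{w},\mathbb{H}\}$ whenever $m\ge m(\eta)$. By choosing $\eta$ sufficiently small in terms of the (deterministic) lower bound on $r$, upper bound on $e$, and the gap $\delta$ from \eqref{hyphyp}, I obtain
\[
\left[\frac{\vc{w}_m}{\sqrt{r_m}},\mathbb{H}_m\right]\in\mathcal{S}\left[e_m-\tfrac{\delta}{2}\right]\quad\prst\text{-a.s.},
\]
together with deterministic two-sided bounds on $r_m$ and $e_m$. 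Because these approximations use only the values of $[e,r,\vc{w},\mathbb{H}]$ at times $jT/m$, they are $(\mathfrak{F}_t)$-adapted in the piecewise-constant sense required by Lemma \ref{OL3}.

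Next, for every $n\in\mathbb{N}$ I would apply Lemma \ref{OL3} to $[e_m,r_m,\vc{w}_m,\mathbb{H}_m]$ to produce oscillatory increments $\vc{w}_n^{(m)}\in\DC(Q;R^N)$ and $\mathbb{V}_n^{(m)}\in\DC(Q;R^{N\times N}_{0,{\rm sym}})$ that are $(\mathfrak{F}_t)$-adapted, have compact range in $C(\Ov Q)$, satisfy the divergence constraints and the weak-convergence property, and enjoy the strengthened inclusion $[(\vc{w}_m+\vc{w}_n^{(m)})/\sqrt{r_m},\mathbb{H}_m+\mathbb{V}_n^{(m)}]\in\mathcal{S}[e_m-\delta_n]$ with a deterministic $\delta_n>0$. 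Then I extract a deterministic diagonal $m=m(n)\to\infty$ with $m(n)\ge m(\eta_n)$ for some $\eta_n\ll\delta_n$ and set $\vc{w}_n:=\vc{w}_n^{(m(n))}$, $\mathbb{V}_n:=\mathbb{V}_n^{(m(n))}$. Continuity of ${\rm dist}(\cdot;\partial\mathcal{S}[\cdot])$, combined with the uniform error being small compared to $\delta_n$, transfers the inclusion back to the original coefficients as $[(\vc{w}+\vc{w}_n)/\sqrt{r},\mathbb{H}+\mathbb{V}_n]\in\mathcal{S}[e-\tilde\delta_n]$ for a possibly smaller but still deterministic $\tilde\delta_n>0$. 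Adaptedness and compact range are preserved by construction, and the weak convergence $\vc{w}_n\to 0$ in $C_{\rm weak}([0,T];L^2(\TN;R^N))$ is inherited because the oscillation frequency in Lemma \ref{OL3} can be taken arbitrarily large at each step.

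The lower bound \eqref{oscil} should then follow from the piecewise-constant estimate \eqref{TQ3} applied to $[e_{m(n)},r_{m(n)},\vc{w}_{m(n)},\mathbb{H}_{m(n)}]$ by passing to the limit $m(n)\to\infty$, using uniform convergence together with the deterministic lower bound on $r$ and upper bound on $e$. The hard part is ensuring that the diagonal $m(n)$ can be chosen \emph{deterministically}, so that the resulting $[\vc{w}_n,\mathbb{V}_n]$ stays $(\mathfrak{F}_t)$-adapted with compact range; this is precisely where the compact-range hypothesis on $[e,r,\vc{w},\mathbb{H}]$ and the deterministic gap $\delta$ in \eqref{hyphyp} are indispensable, as they reduce a random pathwise approximation problem to a uniform one with fully deterministic parameters.
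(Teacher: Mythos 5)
Your proposal follows the same route as the paper: approximate $[e,r,\vc{w},\mathbb{H}]$ by the piecewise-constant processes \eqref{AP1}--\eqref{AP2}, use the compact-range hypothesis to make the uniform bound \eqref{AP3} hold with a \emph{deterministic} index $m(\eta)$, apply Lemma~\ref{OL3}, and transfer the inclusion and the estimate back to the original coefficients exploiting openness of $\mathcal{S}[\cdot]$ and the deterministic gap $\delta$ from \eqref{hyphyp} --- the paper itself is laconic here, essentially deferring to \cite[Section 4.3]{DoFeMa}, so your write-up actually supplies more of the argument than the paper does. One place to be careful: in the diagonal you require $\eta_n\ll\delta_n$, but $\delta_n$ is produced by applying Lemma~\ref{OL3} at level $m=m(n)$, which in turn is chosen after $\eta_n$; to avoid this circularity, note that the gap $\delta/2$ in the approximated hypothesis is uniform in $m$ (for $m\ge m(\delta/2)$), so the deterministic constant $\delta_n$ in Lemma~\ref{OL3} can be chosen independently of $m$ (e.g.\ $\delta_n\equiv\delta/4$), after which $\eta_n$ and $m(n)$ can be fixed with no circular dependence.
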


\begin{Remark} \label{OLR1}

Observe that the assumption for a random variable $[e,r, \vc{w}, \mathbb{H}]$ to be of compact range in $C \Big(\Ov{Q}; (0, \infty)^2 \times R^N \times R^{N \times n}_{0, {\rm sym}} \Big)$ includes
\[
0 < \underline{r}_M \leq r \leq \Ov{r}_M, \ e \leq e_M \ \prst\mbox{-a.s.}
\]
for some deterministic constants $\underline{r}_M$, $\Ov{r}_M$, $e_M$ {as well as
a positive lower bound for $e$ already guaranteed by (\ref{hyphyp}).}

\end{Remark}

\begin{Remark} \label{OLR2}

The fact that the continuous processes considered in Lemma \ref{OL4} must have compact range
is definitely restrictive but possibly unavoidable. This is also the main reason why our result holds up to a stopping time albeit arbitrarily large
with ``high'' probability. Otherwise, the size of the grid used to construct the approximations $F_m$ would have to be a random variable. The oscillatory increments
$\vc{w}_n$ would be then constructed on a grid determined by random points $0 < t_1 < t_2 < \dots <t_m $ related to stopping times associated to certain norms of the random processes. Here, the lengths of the interval
$[t_m, t_{m+1}]$ would have to be $t_m$ predictable which seems impossible.

\end{Remark}

\section{Infinitely many solutions}
\label{Inf}

We are ready to show Theorem \ref{MT1} or, equivalently, the abstract ``Euler'' problem (\ref{AE5}), (\ref{AE9}), respectively. We focus on problem (\ref{AE5}),
where the tensor $\mathbb{M}$ is constant, referring to \cite{Fei2016} for how to accommodate the dependence $\mathbb{M} = \mathbb{M}[\vc{v}]$.

\subsection{Subsolutions}
\label{sec:subsol}
We introduce the set
\begin{align*}
\mathcal X(R^N):=\left\{\vc{v}:\Omega\times Q\rightarrow R^N \ \Big| \ \vc{v} \in C([0,T] \times \TN; R^N)\,\,\mathbb P\text{-a.s. with compact range}\right\}
\end{align*}
and analogously $\mathcal X(R^{N \times N}_{0,{\rm sym}})$.
Following \cite{DelSze3}, we introduce the
set of \emph{subsolutions}.
Let the functions $\vc{v}_0$, $\vc{h}$, $r$ and $\mathbb{M}$ satisfy (\ref{AE6}), and $e = e(t)$ is a real valued $(\mathfrak F_t)$-adapted process specified below.
In particular, the process $\left[\vc{h},r, \mathbb{M}] \in C([0,T] \times \TN; R^N \times (0, \infty) \times R^{N \times N}_{0, {\rm sym}} \right]$
is $(\mathfrak{F}_t)_{t \geq 0}$ adapted and with compact range.
\begin{align}
X_0 &= \left\{ \vc{v}\in \mathcal X(R^N) \ \Big| \ \mbox{$\vc{v}$ is $(\mathfrak{F}_t)$-adapted with} \
\vc{v}(0, \cdot) = \vc{v}_0 ,\ \mbox{there is }\tn{F}\in \mathcal X(R^{N \times N}_{0,{\rm sym}})\,\mbox{$(\mathfrak{F}_t)$-adapted}   \right.\nonumber
\\
&\qquad\ \partial_t \vc{v} + \Div \tn{F} = 0,
 \ \Div \vc{v} = 0 \ \mbox{in}\ \mathcal{D}'((0,T) \times \TN; R^N)\,\,\mathbb P\text{-a.s.},
\nonumber\\
&
 \left.\qquad\frac{N}{2}\lambda_{\rm max} \left[ \frac{ (\vc{v} + \vc{h} ) \otimes (\vc{v} + \vc{h}) }{r} - \tn{F} + \tn{M} \right]
< e - \delta \ \ \forall \ 0 \leq t \leq T, \ x \in \TN,\,\,\mathbb P\text{-a.s.} \right.\nonumber\\& \left.
\qquad\mbox{for some deterministic} \ \delta > 0\vphantom{\frac{1}{2}}  \right\}. \label{E6}
\end{align}
Here the functions $\vc{v}_0$, $\vc{h}$, $r$ and $\mathbb{M}$ satisfy (\ref{AE6}), and $e = e(t)$ is a real valued process specified below.
In particular, the process $\left[\vc{h},r, \mathbb{M}] \in C([0,T] \times \TN; R^N \times (0, \infty) \times R^{N \times N}_{0, {\rm sym}} \right]$ $\pas$
is $(\mathfrak{F}_t)_{t \geq 0}$ adapted and with compact range.

\begin{Remark} \label{RemFi}

{
The deterministic constant $\delta > 0$ may vary from one subsolution to another. The exact meaning of the condition}
\[
\frac{N}{2}\lambda_{\rm max} \left[ \frac{ (\vc{v} + \vc{h} ) \otimes (\vc{v} + \vc{h}) }{r} - \tn{F} + \tn{M} \right]
< e - \delta
\]
{is}
\[
{\rm ess} \sup_{\Omega} \sup_{t \in [0,T]; x \in \TN}\left(
\frac{N}{2}\lambda_{\rm max} \left[ \frac{ (\vc{v} + \vc{h} ) \otimes (\vc{v} + \vc{h}) }{r} - \tn{F} + \tn{M} \right] -e\right)
< 0.
\]

\end{Remark}

\subsection{Existence of a subsolution}

Next we claim that $e$ can be fixed in such a way that the set of subsolutions is non-empty. To this end, consider
\[
\vc{v} = \vc{v}_0, \ \mathbb{F} = 0.
\]
This is obviously a subsolution provided $e$ is taken in such a way that
\[
\frac{N}{2}\lambda_{\rm max} \left[ \frac{ (\vc{v}_0 + \vc{h} ) \otimes (\vc{v}_0 + \vc{h}) }{r}  + \tn{M} \right]
< e - \delta.
\]
In view of (\ref{AE6}) this is possible, where $e = e_M$ can be taken a sufficiently large \emph{deterministic} constant.

\subsection{Topology on the set of subsolutions}

The processes $\vc{v}$ belonging to $X_0$ are uniformly \emph{deterministically} bounded in $L^\infty((0,T) \times \TN)$, specifically,
$\vc{v}(t) \in B_\infty$ for any $t \in [0,T]$ $\prst$-a.s., where $B_\infty$ is a ball in $L^\infty(\TN)$ with a deterministic radius. Consequently, we may consider
the metric $d$ associated to the weak $L^2(\TN)$-topology on $B_\infty$, together with
\[
D[\vc{v}, \vc{w}] = \expe{ \sup_{t \in [0,T]} d[\vc{v}(t); \vc{w}(t)] }.
\]

Let $X$ be the completion of $X_0$ with respect to the metric $D$. Thus $X$ is a complete metric space with infinite cardinality. Note that
any element of $X$ is $(\mathfrak F_t)$-adapted as the limit of measurable functions is measurable.

\subsection{Convex functional}
\label{sub:var}

Similarly to \cite{DelSze3}, we introduce the functional
\[
I[\vc{v}] = \expe{ \int_0^T \intTN{ \left[ \frac{1}{2} \frac{|\vc{v} + \vc{h}|^2}{r} - e \right] } \dt }.
\]
Exactly as in \cite{DelSze3}, it can be shown that:
\begin{itemize}
\item $I$ is lower semi-continuous on the space $X$;
\item $I[\vc{v}] \leq 0$ for any $\vc{v} \in X$;
\item if $I[\vc{v}] = 0$ then
\begin{equation} \label{CI1}
e = \frac{1}{2} \frac{|\vc{v} + \vc{h}|^2}{r} \ \mbox{a.e. in}\ (0,T) \times \TN.
\end{equation}
$\prst$-a.s.
\end{itemize}
We want to prove that each $\vc{v}\in X$ with $I[\vc{v}] = 0$ solves the abstract Euler equation (\ref{AE5}).
Let $\vc{v}\in X$. Then there is $(\vc{v}_m)\subset X_0$ with $\vc{v}_m\rightarrow \vc{v}$
with respect to the metric $D$. By definition of $X_0$ we can find a sequence of $(\mathfrak F_t)$-adapted processes $(\tn{F}_m)$ with $\mathbb{F}_m \in L^\infty(Q,R^{N \times N}_{0, {\rm sym}})$ $\prst$-a.s. such that
\begin{equation} \label{CI2}
\partial_t \vc{v}_m + \Div \mathbb{F}_m = 0 \
\mbox{in}\ \mathcal{D}'((0,T) \times R^N)
\end{equation}
 $\prst$-a.s. and
\[
\frac{N}{2}\lambda_{\rm max} \left[ \frac{ (\vc{v}_m + \vc{h} ) \otimes (\vc{v}_m + \vc{h}) }{r} - \tn{F}_m + \tn{M} \right]
\leq e.
\]
Using \eqref{AlIn2} and the properties of $\tn{M}$ (recall  (\ref{AE6}))
we see that $\tn{F}_m$ is uniformly bounded in $L^\infty(\Omega\times Q,R^{N \times N}_{0, {\rm sym}})$. Hence, after choosing a weakly$^*$-converging subsequence, we obtain
\begin{equation} \label{CI2a}
\partial_t \vc{v} + \Div \mathbb{F} = 0,\ \Div \vc{v} = 0, \ \vc{v}(0, \cdot) = \vc{v}_0 \
\mbox{in}\ \mathcal{D}'((0,T) \times R^N),
\end{equation}
for a certain $(\mathfrak F_t)$-adapted process $\tn{F}$ with $\mathbb{F} \in L^\infty(Q,R^{N \times N}_{0, {\rm sym}})$ $\prst$-a.s.
Due to convexity of the functional
\[
[\vc{v}, \mathbb{F}] \mapsto \frac{N}{2}\lambda_{\rm max} \left[ \frac{ (\vc{v} + \vc{h} ) \otimes (\vc{v} + \vc{h}) }{r} - \tn{F} + \tn{M} \right],
\]
we have
\[
\frac{N}{2}\lambda_{\rm max} \left[ \frac{ (\vc{v} + \vc{h} ) \otimes (\vc{v} + \vc{h}) }{r} - \tn{F} + \tn{M} \right]
\leq e.
\]
 Consequently, by virtue of (\ref{AlIn}), relation (\ref{CI1}) implies
\[
\tn{F} = \tn{M} + \frac{ (\vc{v} + \vc{h} ) \otimes (\vc{v} + \vc{h}) }{r} - \frac{1}{N} \frac{|\vc{v} + \vc{h}|^2}{r}
= \tn{M} + \frac{ (\vc{v} + \vc{h} ) \otimes (\vc{v} + \vc{h}) }{r} - \frac{2}{N} e.
\]
As $e$ is independent of $x$, equation (\ref{CI2}) yields the desired conclusion (\ref{AE5}).

Thus each zero point of $I$ yields a weak solution of the abstract Euler problem (\ref{AE5}). Our next claim is that $I[\vc{v}] = 0$ whenever
$\vc{v}$ is a point of continuity of $I$ on $X$. By means of the Baire category argument, the points of continuity of $I$ are dense in $X$ which completes the proof of existence of infinitely many solutions claimed in Theorem \ref{MT1}. Thus it remains to show that $I$ vanishes at each point of continuity, which is the objective of the last section.

\subsection{Points of continuity of $I$ in $X$}

Let $\vc{v}$ be a point of continuity of $I$ on $X$. Suppose that $I[\vc{v}] < 0$. Consequently, there is a sequence
\[
\vc{v}_m \in X_0, \ D[\vc{v}_m;\vc{v}] \to 0,\ I[\vc{v}_m] \to I[\vc{v}], \ I[\vc{v}_m] < - \ep < 0 \ \mbox{for all}\ m=0,1,\dots
\]
Now, we use the oscillatory lemma (version Lemma \ref{OL4}) with the ansatz $\vc{w} = \vc{v}_m + \vc{h}$, $\mathbb{H} =\mathbb{F}_m- \mathbb{M}$.
Consequently, for each fixed $m$, we find a sequence $\left\{ \vc{w}_{m,n} \right\}_{n=1}^\infty \subset X_0$ such that
\begin{align*}
&\vc{v}_m + \vc{w}_{m,n} \in X_0, \ D[ \vc{v}_m + \vc{w}_{m,n}, \vc{v}_m] \to 0 \ \mbox{as}\ n \to \infty.
\end{align*}
The first statement follows from Lemma \ref{OL4} iv) which also yields
a uniform bound for $\bfw_{m,n}$ as a consequence of \eqref{AlIn}.
The convergence with respect to the metric $D$ follows from Lemma \ref{OL4} iii), the uniform bounds for $\bfw_{m,n}$ and dominated convergence.
Moreover, due to Lemma \ref{OL4} iii), we have
\[
\begin{split}
&\liminf_{n \to \infty} I[\vc{v}_m + \vc{w}_{m,n}]= I[\vc{v}_m] + \liminf_{n \to \infty} \frac{1}{2} \expe{ \int_0^T \intTN{
\frac{ |\vc{w}_{m,n}|^2 }{r} } \dt }.
\end{split}
\]
Here, by virtue of (\ref{oscil}), Fatou's lemma and Jensen's inequality
\[
\begin{split}
\liminf_{n \to \infty} \frac{1}{2} \expe{ \int_0^T \intTN{
\frac{ |\vc{w}_{m,n}|^2 }{r} } \dt } &\geq \frac{c(N,T)}{e} \left( \expe{ \int_0^T \intTN{ \left[e - \frac{1}{2} \frac{|\vc{v}_m + \vc{h}|^2 }{r} \right] } \dt } \right)^2
\\&= \frac{c(N,T)}{e} I^2[\vc{v}_m] \geq \ep^2 \frac{c(N,T)}{e}.
\end{split}
\]
In such a way, we may construct a sequence $(\tilde{\vc{v}}_k) \subset X_0$, $D[\tilde{\vc{v}}_k, \vc{v}] \to 0$, and
\begin{align}\label{eq:27.04}
\liminf_{k \to \infty} I[\tilde{\vc{v}}_k] > I[\vc{v}].
\end{align}
For instance, we could take $\tilde{\vc{v}}_k=\vc{v}_k+\vc{w}_{k,k}$.
Relation \eqref{eq:27.04} contradicts the assumption that $\vc{v}$ is a point of continuity of $I$.

%

\def\cprime{$'$} \def\ocirc#1{\ifmmode\setbox0=\hbox{$#1$}\dimen0=\ht0
  \advance\dimen0 by1pt\rlap{\hbox to\wd0{\hss\raise\dimen0
  \hbox{\hskip.2em$\scriptscriptstyle\circ$}\hss}}#1\else {\accent"17 #1}\fi}

\end{document}